\documentclass[12pt]{amsart}
\usepackage{amsmath,amsfonts,amsthm,amscd,amssymb,mathrsfs,amssymb,bm,pb-diagram, here, extsizes, midpage}
\usepackage{setspace}
\usepackage[all,cmtip]{xy}
\usepackage{tikz-cd}

\usepackage{blkarray}
\usepackage{multirow}
\usepackage{mathtools}
\usepackage{graphicx}

\usepackage{mathrsfs, arydshln}
\usepackage[all]{xy}
\newtheorem{theorem}{Theorem}

\newtheorem{proposition}[theorem]{Proposition}
\newtheorem{lemma}[theorem]{Lemma}
\newtheorem{definition}[theorem]{Definition}

\newcommand{\DDD}{\Delta}

\newcommand{\Lmd}{\Lambda}

\newcommand{\PP}{\mathbb{P}}
\newcommand{\CC}{\mathbb{C}}
\newcommand{\BB}{\mathbb{B}}

\newcommand{\RR}{\mathbb{R}}
\newcommand{\ZZ}{\mathbb{Z}}

\newcommand{\SU}{{\rm{SU}}}

\newcommand{\Sing}{{\rm{Sing}\,}}

\newcommand{\qdr}{\PP^1\times\PP^1}
\newcommand{\upi}{^{(i)}}

\newcommand{\upk}{^{(k)}}

\newcommand{\upone}{^{(1)}}

\renewcommand{\tilde}{\widetilde}

\newcommand{\mf}{\mathfrak}

\newcommand{\ol}{\overline}
\newcommand{\lra}{\longrightarrow}
\newcommand{\lras}{\,\longrightarrow\,}

\newcommand{\set}{\,|\,}

\newcommand{\proofend}{\hfill$\square$}

\newcommand{\inv}{^{-1}}

\newcommand{\Pic}{{\rm{Pic}}}

\newcommand{\ms}{\mathscr}
\newcommand{\minus}{\backslash}
\newcommand{\ptl}{\partial}

\newcommand{\qandq}{\quad{\text{and}}\quad}

\newcommand{\us}{^{\sigma}}

\DeclareFontFamily{U}{mathx}{}
\DeclareFontShape{U}{mathx}{m}{n}{<-> mathx10}{}
\DeclareSymbolFont{mathx}{U}{mathx}{m}{n}
\DeclareMathAccent{\widehat}{0}{mathx}{"70}
\DeclareMathAccent{\widecheck}{0}{mathx}{"71}

\numberwithin{equation}{section}
\numberwithin{theorem}{section}

\begin{document}
\bibliographystyle{alpha} 
\title[]
{Gravitational instantons and hyperbolic space}
\author{Nobuhiro Honda}
\address{Department of Mathematics, Institute
of Science Tokyo
}
\email{honda@math.titech.ac.jp}

\thanks{The author was partially supported by JSPS KAKENHI Grant 22K03308.
\\
{\it{Mathematics Subject Classification}} (2020) 53C28, 53C50, 53C22}
\begin{abstract}
We construct a degeneration of Hitchin's non-standard
minitwistor spaces associated with toric gravitational
instantons of type A$_{\rm odd}$ to the minitwistor space of a simple
hyperbolic orbifold. We show that the corresponding families
of minitwistor lines also converge under this degeneration.
Consequently, the associated non-standard Einstein--Weyl
spaces converge to the hyperbolic orbifold.
\end{abstract}

\maketitle

\setcounter{tocdepth}{1}
\section{Introduction}

Minitwistor spaces are complex surfaces that encode
three-dimensional Einstein--Weyl (EW) geometry. Under the
minitwistor correspondence, points of an EW space are
represented by real rational curves, called minitwistor
lines, in the corresponding minitwistor space
\cite{Hi82,JT85,PT93}. This framework was extended to
three-dimensional Severi varieties of nodal rational curves
in \cite{HN11}.

In \cite{Hi25}, Hitchin constructed a minitwistor space from
a toric ALE gravitational instanton of type $A_{2n-1}$
\cite{GH78,Kro89} by quotienting its twistor space by the
complexified scalar $S^1$-action. The resulting space
$\ms T$ is naturally realized as a double cover of the cone
$C(\Lmd)$ over a rational normal curve
$\Lmd\subset\PP^n$, branched along a real hyperelliptic curve
$\Sigma$ of genus $g=n-1$. Its minitwistor lines were
described in \cite{H26}.

In this paper, we show that the associated EW spaces can be regarded as
deformations of simple hyperbolic orbifolds. Let $\BB$ be the
hyperbolic three-ball and let $\ZZ_n$ act on it by rotations
around a fixed geodesic, where $\ZZ_n\subset SO(3)$ is the
image of the asymptotic group
$\ZZ_{2n}\subset SU(2)$ of the ALE space. The minitwistor
space of $\BB/\ZZ_n$ is $Q/\ZZ_n$, where
$Q=\PP^1\times\PP^1$. Like $\ms T$, this quotient is a double
cover of $C(\Lmd)$, now branched along a singular curve
$\Sigma_0$, which is reducible when $n$ is even. Letting half
of the branch points of $\Sigma$ coalesce at $0\in\Lmd$ and
the other half at $\infty\in\Lmd$, followed by a suitable
rescaling, gives a degeneration of $\ms T$ to $Q/\ZZ_n$.
A related Lorentzian degeneration to the standard de Sitter
space was studied in \cite{HN22}.

The quotient description also explains the nodes of the
limiting minitwistor lines. Away from the rotation axis, such
a line is the image of a smooth $(1,1)$-curve
$\tilde C\subset Q$, and intersections among its distinct
$\ZZ_n$-translates descend to the $n-1$ ordinary nodes of its
image. These nodes are all non-real when $n$ is odd, while
exactly one is real when $n$ is even; the real node corresponds
to a reflexive geodesic in the orbifold. Points on the
rotation axis give multiple minitwistor lines. When $n$ is
even, there is also a twisted family of minitwistor lines arising from non-real
$(1,1)$-curves whose images in the quotient are real.

Our main result is the following.

\begin{theorem}
Under the above degeneration of $\ms T$ to $Q/\ZZ_n$, the
family of minitwistor lines on $\ms T$ converges to the family
of minitwistor lines on $Q/\ZZ_n$ corresponding to the
hyperbolic orbifold $\BB/\ZZ_n$.
\end{theorem}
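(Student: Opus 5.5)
We make the degeneration explicit and then compare the two families of minitwistor lines inside one fixed ambient surface.

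\textbf{Coordinates.} Write the branch curve of $\ms T\to C(\Lmd)$ in weighted coordinates on the cone, with $\Lmd$ parametrized by $\lambda\in\PP^1$. Then $\ms T$ is cut out by
\[
y^2=\prod_{i=1}^{n}(\lambda-a_i)(\lambda-b_i)\,z^{2n},
\]
suitably homogenized, with $\Sigma$ real and of genus $n-1$.

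\textbf{The family.} Let $a_i=t\alpha_i\to 0$ and $b_i=\beta_i/t\to\infty$, and rescale the fibre coordinate by the appropriate power of $t$. This gives a flat family $\ms T_t\to C(\Lmd)$ whose fibre at $t=0$ is the double cover branched over $\Sigma_0:\ y^2=\lambda^n z^{2n}$ (up to constants). We identify this fibre with $Q/\ZZ_n$ via the map $Q\to Q/\ZZ_n\to C(\Lmd)$, given by $(u,v)\mapsto(uv,\ldots)$ with $\lambda=u^n$. This is presumably the construction the paper sets up first, and we take it as given.

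\textbf{Lines on $\ms T_t$.} By \cite{H26}, a generic minitwistor line on $\ms T_t$ is the inverse image of a hyperplane section of $C(\Lmd)$ satisfying a tangency condition. The curve is rational with $n-1$ nodes, so the hyperplane must be tangent to $\Sigma_t$ at $n-1$ points, or pass through the vertex in the degenerate cases. The family is therefore a real $3$-dimensional component of a Severi-type variety, parametrized by hyperplanes $H$ together with the combinatorics of the tangency points.

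We then take limits of these conditions as $t\to 0$. The tangency points split between the cluster near $\lambda=0$ and the cluster near $\lambda=\infty$. After rescaling, each tangency condition becomes a condition on $H$ relative to $\Sigma_0$. We verify that the limiting curves are exactly the images of $(1,1)$-curves $\tilde C\subset Q$, with the $n-1$ nodes coming from intersections $\tilde C\cap g\tilde C$, $g\in\ZZ_n\setminus\{1\}$, as described in the introduction. The concrete tool is to pull a hyperplane section of $C(\Lmd)$ back to $Q$. It becomes the $\ZZ_n$-invariant curve $\bigcup_g g\tilde C$, and its defining polynomial in $(u,v)$ is a product of $n$ bilinear forms. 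Comparing coefficients shows that the limiting tangency equations are equivalent to this factorization.

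\textbf{Convergence of the parameter spaces.} This is a Hausdorff and real-analytic statement. We show that the map from the EW space of $\ms T_t$ into the Hilbert scheme of the total space extends continuously to $t=0$, with image the family corresponding to $\BB/\ZZ_n$.

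\textbf{Main obstacle.} The hard part is the special strata: points on the rotation axis, which give multiple lines, and, for even $n$, the real node and the twisted family coming from non-real $(1,1)$-curves. Near these strata the naive limit of the tangency conditions is non-reduced, and several tangency points collide at $0$ or $\infty$. There we would blow up the parameter space, meaning we rescale the tangency positions by $t$, and solve the rescaled equations. We expect Hitchin's real structure to pick out a unique real branch. We would then check that for even $n$ the limits of the lines of \cite{H26} that have a real node give exactly the reflexive-geodesic lines and the twisted family.
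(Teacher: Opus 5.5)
Your proposal has a genuine gap, and it starts with the description of the lines on $\ms T_t$. A generic minitwistor line is \emph{not} of the form $\Pi^{-1}(h\cap C(\Lmd))$; only the lines invariant under the covering involution of $\Pi$ are. A non-symmetric line maps birationally onto a degree-$2n$ curve in the cone. Moreover, real hyperplanes tangent to $\Sigma_t$ at $n-1$ points form a real $2$-dimensional family, not a $3$-dimensional one: real hyperplanes form an $(n+1)$-dimensional space, and the real tangency divisor imposes $n-1$ conditions. So your limiting argument can only be run on the symmetric slice. To recover the full family you need the $S^1$-action $(x,y)\mapsto(\lambda x,\lambda^{-1}y)$, compatible with the degeneration, which is how the paper reduces the theorem to the symmetric case.

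The more serious problem is that the tangency conditions do not single out the family. Fix the non-tangential points $q,\ol q$. A hyperplane with $h|_{\Sigma_t}=q+\ol q+2D$ then corresponds to a square root $[D]$ of $[h_t-q-\ol q]$ in $\Pic^g(\Sigma_t)$, and there are $2^{2g}$ such square roots. Since all $a_i$ are real, the $2$-torsion classes $[r_i-r_j]$ are real, so the real structure cannot select one branch, contrary to your expectation. The minitwistor lines lie on one specific continuous branch, the Seifert surface of \cite{H26}, which is fixed by a boundary condition.

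As $t\to0$, the Jacobian degenerates to $\CC^g$ or $\CC^*\times\CC^{g-1}$, whose $2$-torsion has order $1$ or $2$. Since the relative squaring map is a local biholomorphism, at most one or two branches can converge in the relative Picard space. The others must degenerate, for instance with tangency points running into $p_0,p_\infty$ or with the hyperplane acquiring the vertex. Neither limits of tangency equations nor compactness of the Hilbert scheme can tell you that the minitwistor branch is a surviving one. So ``the map extends continuously to $t=0$'' is exactly what remains unproved, and your blow-up of the special strata does not address it.

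The paper supplies this step as follows. It computes $J(\Sigma_0)$ and shows that $\mf T$ is a local biholomorphism. It then anchors the distinguished lift at a reference point whose value converges: $[gr_{1,t}]\to[gp_0]$ for $n$ odd, and $[\tfrac{n-2}{2}q_t+\tfrac n2 q_t']$ with $q_t$ on the surviving real semicircle for $n$ even. Finally, $H^1(\Sigma_0,L_0)=0$ gives $D_t\to D_0$, hence convergence of the hyperplanes and of the cycles.

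Note also that for even $n$ the whole symmetric family converges to \emph{either} the ordinary \emph{or} the twisted family, corresponding to the two lifts under $\mf t_0$. The twisted family is not the limit of a subfamily singled out by a real node, since every ordinary and every twisted line already has exactly one real node.
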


The convergence is understood in the sense of projective
$1$-cycles. To prove it, we first consider the symmetric
minitwistor lines, which form a two-dimensional slice
parameterized by a quarter of the hyperelliptic curve $\Sigma$ \cite{H26}. If $h$ is
the corresponding hyperplane section, then
$$
h|_{\Sigma}=q+\ol q+2D,
$$
where $D$ is its tangency divisor. Thus
$[D]\in\Pic^g(\Sigma)$ is a square root of
$[h|_{\Sigma}-q-\ol q]$. The boundary condition in \cite{H26} selects a distinguished
square root for each $q$, and these square roots vary
continuously with $q$. Their locus is called the Seifert surface.
Using the relative Picard
space, we extend this surface across the degeneration. The
key point is that the squaring map on the relevant Picard
component of $\Sigma_0$ is an isomorphism when $n$ is odd and
a two-to-one unramified covering when $n$ is even. Together
with an explicit description of the limiting curves, this
proves convergence for the symmetric slice; the full result
then follows from the $S^1$-action.

Section~2 describes the minitwistor lines on $Q/\ZZ_n$.
Section~3 constructs the degeneration of the minitwistor
spaces. Section~4 studies the symmetric minitwistor lines on
the limiting space, and Section~5 proves the convergence
theorem.

\section{The hyperbolic orbifold and nodal minitwistor lines}
\label{s:bmt}
We consider the complex projective space $\PP^3$ equipped with homogeneous coordinates $(X_0,X_1,X_2,X_3)$ and the real structure
$\sigma:(X_0,X_1,X_2,X_3)\longmapsto (\ol X_0,\ol X_1,\ol X_2,\ol X_3)$.
Let $Q\subset\PP^3$ be a smooth quadric defined by the equation 
$X_0^2=X_1^2+X_2^2+X_3^2$.
This is real (i.e.,\,$\sigma$-invariant), and letting $x_i=X_i/X_0$ for $i>0$, the real locus $Q\us$ is the unit sphere $x_1^2+x_2^2+x_3^2=1, x_i\in\RR$, sitting in $\{X_0\neq 0\}=\CC^3$.
Any real hyperplane that does {\em not} intersect $Q\us$ is defined by the equation of the form 
\begin{align}\label{hp1}
c_1x_1+c_2x_2+c_3x_3=1,\quad c_1^2+c_2^2+c_3^2<1,\quad
c_1,c_2,c_3\in\RR.
\end{align}
Hence, the space of such real hyperplanes is identified with the open unit ball 
$$
\BB=\big\{(c_1,c_2,c_3)\in\RR^3\set c_1^2+c_2^2+c_3^2<1\big\} \subset\RR^3.
$$
Sections of the quadric $Q$ by such hyperplanes are always smooth rational curves and they constitute the minitwistor lines on $Q$ which, by the Hitchin correspondence between minitwistor manifolds and 3-dimensional EW spaces \cite{Hi82}, induce the (positive) definite EW structure on $\BB $ that is simply the standard hyperbolic structure.
In the reverse direction, the minitwistor space $Q$, or more precisely, the complement $Q\minus Q\us$, is the space of oriented geodesics on $\BB $; under an identification $Q$ with the product $\qdr$ and 
regarding its two factors as the ideal boundary of $\BB$, each point $(q_-,q_+)\in Q\minus Q\us$ determines an oriented geodesic which has the two points $q_-$ and $q_+$ as its initial point and end point.

We note that if we let $c_1^2+c_2^2+c_3^2=1$ in \eqref{hp1},
then the hyperplane will be tangent to $Q$ at the point $(c_1,c_2,c_3)\in Q\us$, and the hyperplane section will be reducible, consisting of two distinct lines through the tangent point.
These hyperplane sections can be regarded as ``twistor lines'' that precisely correspond to points of the conformal boundary $\ptl\BB\simeq S^2$.
We call these {\em boundary minitwistor lines}.
These will be important in this paper. 

Rotations around any geodesic on $\BB$ are isometries.
Taking the $x_1$-axis as a geodesic, we denote by $R_{\theta}:\BB \lras \BB$ the rotation around it by angle $\theta$ (in some fixed direction).
Now, for any integer $n>1$, consider the group of isometries of $\BB$ generated by $R_{\theta}$ with $\theta = \frac{2\pi}n$.
This is of order $n$ and generates an action of the group $\ZZ_n=\ZZ/n\ZZ$ on $\BB$.
The quotient space $\BB /\ZZ_n$ is a hyperbolic orbifold having orbifold singularities along the image of the $x_1$-axis, and for the boundary, we have
\[
\ptl(\BB/\ZZ_n)
\simeq(\ptl\BB)/\ZZ_n
=S^2/\ZZ_n,
\]
whose underlying topological space is $S^2$.
We denote by $\Phi:\BB\lras\BB/\ZZ_n$ the quotient map.
In the following, we use the letter $R$ to mean the generating rotation 
$R_{\frac{2\pi}n}$ of the group $\ZZ_n$. 

Any isometry of $\BB$ induces a holomorphic automorphism of $Q$. 
We use the same symbol $R_{\theta}$ and $R$ for the holomorphic automorphism induced by the rotations on $\BB$.
If $\ZZ_n$ also denotes the group generated by $R$ acting on $Q$, then the quotient surface $Q/\ZZ_n$ is the minitwistor space of the hyperbolic orbifold $\BB/\ZZ_n$.
Introducing new homogeneous coordinates $Y_i$ on the ambient space $\PP^3$ by 
\begin{align}\label{Y}
Y_0=X_0+X_1,\quad
Y_1=X_0-X_1,\quad
Y_2=X_2+iX_3,\quad
Y_3=X_2-iX_3,
\end{align}
the equation of $Q$ will be $Y_0Y_1=Y_2Y_3$.
In the affine coordinates 
\begin{align}\label{afc}
(\zeta,\eta) := 
\Big(\frac{Y_2}{Y_0}, \,\frac{Y_3}{Y_0}\Big)
\end{align}
on $Q\simeq\qdr$,
the real structure $\sigma$ and the automorphism $R_{\theta}$ on $Q$ are respectively written as
\begin{align}
\sigma&:(\zeta,\eta) \longmapsto (\ol \eta,\ol \zeta),\label{rs0}\\
R_{\theta}&:(\zeta,\eta)\longmapsto \big(e^{i\theta}\zeta, e^{-i\theta}\eta\big).\label{Rn}
\end{align}
In particular, the real sphere $Q\us$ is exactly the anti-holomorphic diagonal $\{\zeta=\ol \eta\}$.

We use the same letter $\Phi$ to mean the quotient map $Q\lras Q/\ZZ_n$ under the $\ZZ_n$-action.
This preserves the real structure \eqref{rs0} and we use the same letter $\sigma$ for the real structure induced on $Q/\ZZ_n$.
Using \eqref{Rn}, the $\ZZ_n$-action on $Q$ has exactly four fixed points 
\begin{align}\label{sing1}
(0,0),(\infty,\infty), (0,\infty) \qandq (\infty,0),
\end{align}
and is free away from these points.
The images of the former two points are $A_{n-1}$-singularities of $Q/\ZZ_n$, which belong to $Q\us$, while the images of the latter two points are $A_{n,1}$-singularities of $Q/\ZZ_n$, which are $\sigma$-conjugate to each other.

The minitwistor lines on $Q/\ZZ_n$ corresponding to points of
$\BB/\ZZ_n$ are precisely the images under $\Phi$ of the
standard minitwistor lines on $Q$, and hence are all real.
We first study this family, including its multiple members and
the singularities of its non-multiple members. We then show
that, when $n$ is even, $Q/\ZZ_n$ carries another real family
of minitwistor lines, obtained from non-real $(1,1)$-curves on
$Q$, which we call the twisted family.

We begin with the multiple members. The singular locus of
$\BB/\ZZ_n$ is the image of the $x_1$-axis. The minitwistor
lines on $Q$ corresponding to points on this axis are given by
\begin{align}\label{mult}
\zeta\eta+b=0,\qquad b>0.
\end{align}
These curves are $\ZZ_n$-invariant. Hence their images under
$\Phi$, regarded as cycles, are smooth rational curves with
multiplicity $n$. We call them {\em multiple} minitwistor lines.
They are all real, and their reduced curves have real points
if and only if $n$ is even.
Thus, in that case, multiple minitwistor lines have a real circle although they correspond to points of the EW space which is definite.
We note that the two limiting curves obtained by letting $b=0,\infty$ in \eqref{mult} are the boundary minitwistor lines that correspond to the two points $(\pm 1,0,0)$ which are the two intersection points of $\ptl\BB$ with the $x_1$-axis.

The minitwistor lines on $Q$ which correspond to points of $\BB$ not belonging to the $x_1$-axis are defined by the equation of the form
\begin{align}\label{ab1}
\zeta\eta+a\zeta+\ol a\eta+b=0,
\qquad
a\in\CC^*,\,b\in\RR, \quad |a|^2-b<0.
\end{align}
In fact, these curves are all obtained by intersecting the hyperplanes \eqref{hp1} with $Q$, through the coordinate changes \eqref{Y} and \eqref{afc}. 
Hence, filling the positive part of the $b$-axis by the set of curves \eqref{mult}, 
the domain 
$$
\big\{(a,b)\in \CC\times\RR\set |a|^2<b\big\}\subset\CC\times\RR\simeq\RR^3
$$ gives a paraboloid model of the hyperbolic ball $\BB$. 
In this model, the rotation $R_\theta$ in \eqref{Rn} acts by
\begin{align}\label{S1act}
(a,b)\longmapsto(e^{-i\theta}a,b).
\end{align}
This $S^1$-action descends to an $S^1/\ZZ_n\simeq S^1$-action on $\BB/\ZZ_n$. 

We now determine the singularities of the images of the
curves in \eqref{ab1}.

\begin{proposition}\label{p:mtl1}
Let $\tilde C\subset Q$ be a curve of the form \eqref{ab1},
and put $C:=\Phi(\tilde C)\subset Q/\ZZ_n$. Then the
restriction
$
\Phi|_{\tilde C}:\tilde C\longrightarrow C
$
is the normalization map. Moreover, $C$ is a rational curve
having exactly $(n-1)$ ordinary nodes as its only
singularities, all of which lie away from the four singular
points of $Q/\ZZ_n$. If $n$ is odd, all these nodes are
non-real, whereas if $n$ is even, exactly one of them is real.
\end{proposition}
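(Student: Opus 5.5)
The plan is to reduce everything to the intersection behaviour of $\tilde C$ with its $\ZZ_n$-translates $R^k(\tilde C)$, $1\le k\le n-1$. Write $\omega=e^{2\pi i/n}$. By \eqref{Rn}, the translate $R^k(\tilde C)$ is the curve of the form \eqref{ab1} with $a$ replaced by $a\omega^{-k}$ and $b$ unchanged. Since $a\neq0$, these $n$ curves are pairwise distinct.

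First I check that $\tilde C$ avoids the four fixed points \eqref{sing1}:
\begin{itemize}
\item $(0,0)$ would force $b=0$, but $b>|a|^2>0$;
\item $(\infty,\infty)$ is excluded because the coefficient of $\zeta\eta$ is nonzero;
\item $(0,\infty)$ and $(\infty,0)$ would force $\ol a=0$, respectively $a=0$.
\end{itemize}
So $\Phi$ is a local biholomorphism near $\tilde C$, and $C$ stays away from the singular points of $Q/\ZZ_n$. Because $\tilde C$ is irreducible and differs from each of its translates, a general point of $\tilde C$ is not $\ZZ_n$-equivalent to any other point of $\tilde C$. Hence $\Phi|_{\tilde C}$ is birational onto $C$, and so it is the normalization.

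The singular points of $C$ are the images of pairs $p\neq q$ in $\tilde C$ with $q=R^kp$. Equivalently, they come from points of $\tilde C\cap R^{-k}(\tilde C)$. To find these intersections, subtract the two equations. Using $(1-\omega^{-k})/(1-\omega^{k})=-\omega^{-k}$, the difference reduces to the line $\eta=u_k\zeta$ with $u_k=a\omega^{-k}/\ol a$, where $|u_k|=1$. Substituting into the equation of $\tilde C$ gives
\[
u_k\zeta^2+a(1+\omega^{-k})\zeta+b=0 .
\]
Normalizing by $\omega^{k}\ol a/a$, its discriminant becomes
\[
4\big(|a|^2\cos^2(\pi k/n)-b\big)<0 .
\]
So this discriminant is nonzero, and $\tilde C$ and $R^k(\tilde C)$ meet transversally in exactly two points. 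This transversality is the main point to verify carefully.

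The slopes $u_k$ are distinct for distinct $k$ mod $n$, and $(0,0)\notin\tilde C$. Hence no point of $\tilde C$ lies on two different translates, so no three preimages are identified. Each singular point of $C$ therefore has exactly two preimages $p$ and $R^kp$. Its two local branches are $\Phi$ of the germs of $\tilde C$ and $R^{-k}\tilde C$ at $p$, and these are transversal, so the singular point is an ordinary node. Counting gives $\sum_{k=1}^{n-1}2=2(n-1)$ points of $\tilde C$ lying on some other translate. These pair up two at a time, giving exactly $n-1$ nodes.

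For reality, I use that $\sigma R=R\sigma$, which is immediate from \eqref{rs0} and \eqref{Rn}, and that $\tilde C$ is $\sigma$-invariant with $\tilde C\cap Q^\sigma=\emptyset$. Suppose a node with preimages $\{p,R^kp\}$ is real. Then $\sigma p\in\{p,R^kp\}$, and $\sigma p=p$ is impossible, so $\sigma p=R^kp$. Applying $\sigma$ again gives $p=R^{2k}p$, and since $p$ is not a fixed point, $2k\equiv0$ mod $n$.
\begin{itemize}
\item If $n$ is odd, this cannot happen, so there are no real nodes.
\item If $n$ is even, this forces $k=n/2$. The set $\tilde C\cap R^{n/2}\tilde C$ consists of two points, swapped by the fixed-point-free $R^{n/2}$, so it forms exactly one node. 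This set is $\sigma$-stable and contains no real points, so $\sigma$ swaps the two points, and this node is real. Every other node is non-real.
\end{itemize}
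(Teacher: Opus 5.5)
Your proof is correct, but it reaches the two key facts by a different route from the paper.

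\textbf{How the paper argues.} The paper obtains the pairwise intersections and the absence of triple points from hyperbolic geometry. The set $\tilde C_i\cap\tilde C_j$ is the pair of oriented geodesics through the points $p_i,p_j\in\BB$. A triple point would force three points of one $\ZZ_n$-orbit, which lie on a circle perpendicular to the axis, onto a single geodesic. Transversality of $\tilde C_i$ and $\tilde C_j$ is simply asserted.

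\textbf{How you argue.} You subtract equations instead. This places $\tilde C\cap R^k\tilde C$ on the $(1,1)$-curve $\eta=u_k\zeta$. The explicit discriminant $4(|a|^2\cos^2(\pi k/n)-b)<0$ gives two distinct points, hence transversality, since $(1,1)\cdot(1,1)=2$. For distinct $k$ these curves meet only at $(0,0)$ and $(\infty,\infty)$, which $\tilde C$ avoids, so no point of $\tilde C$ lies on two translates. Your count on $\tilde C$, with $2(n-1)$ points paired by $p\mapsto R^kp$, is equivalent to the paper's count of $n(n-1)$ points in $Q$ forming $n-1$ free orbits. The reality argument is the same in substance.

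\textbf{What each route buys.} Yours is self-contained and actually verifies the transversality that the paper takes for granted. It also anticipates Lemma \ref{l:q}: for real $a$ the curve $\eta=u_k\zeta$ is exactly $\tilde\Sigma^{(n-k)}$, one of the curves lying over the branch curve $\Sigma_0$. The paper's route is coordinate-free and explains the nodes as geodesics joining orbit points. In particular, the real node for even $n$ corresponds to the geodesic through antipodal orbit points.

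\textbf{Two cosmetic slips.} First, the slope $u_k=a\omega^{-k}/\ol a$ you write is the one for $\tilde C\cap R^{k}\tilde C$, not for $\tilde C\cap R^{-k}\tilde C$. This is harmless, since $k$ runs over all nonzero residues. Second, when comparing slopes you should note that the curves $\eta=u_k\zeta$ also share the point $(\infty,\infty)$; your earlier check that $\tilde C$ avoids it already covers this.
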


\proof
Write $\tilde C=\tilde C_0$, and put
$\tilde C_i:=R^i(\tilde C)$ for the $i$-th move of $\tilde C$ for $i<n$. If
$\omega=e^{2\pi i/n}$, then $\tilde C_i$ is given by
\eqref{ab1} with $a$ replaced by $\omega^{-i}a$. Since
$a\neq0$, the curves
$\tilde C_0,\tilde C_1,\ldots,\tilde C_{n-1}$ are mutually distinct.
Consequently,
\[
\Phi^*C=\sum_{i=0}^{n-1}\tilde C_i.
\]
Since distinct translates of $\tilde C$ intersect only in
finitely many points, the finite map
$\Phi|_{\tilde C}:\tilde C\to C$ is birational and hence is
the normalization map.

Each $\tilde C_i$ is a real $(1,1)$-curve without real
points, and hence it does not pass through $(0,0)$ or
$(\infty,\infty)$. Moreover, its equation, together with
$a\neq0$, shows directly that it passes through neither
$(0,\infty)$ nor $(\infty,0)$. Thus none of the curves
$\tilde C_i$ passes through the four fixed points
\eqref{sing1}.

Since the $\ZZ_n$-action is free away from these four points,
the singularities of $C$ arise from intersections among the
curves $\tilde C_i$, and the quotient does not change their
local analytic types. Any two distinct curves
$\tilde C_i$ and $\tilde C_j$ intersect transversally at two
points, which are exchanged by $\sigma$.
We claim that
\begin{align}\label{disjint}
(\tilde C_i\cap\tilde C_j)\cap(\tilde C_k\cap\tilde C_l)=\emptyset
\end{align}
whenever $i\neq j$, $k\neq l$, and $\{i,j\}\neq\{k,l\}$.

Indeed, if the intersection in \eqref{disjint} were non-empty, then,
using the real structure, we would have
$
\tilde C_i\cap\tilde C_j=\tilde C_k\cap\tilde C_l.
$
Let $p_\nu\in\BB$ be the point corresponding to $\tilde C_\nu$.
The intersection $\tilde C_i\cap\tilde C_j$ corresponds to the geodesic
through $p_i$ and $p_j$, and hence the above equality implies that
$p_i,p_j,p_k,p_l$ lie on the same geodesic.
Since at least three of the indices $i,j,k,l$ are distinct, this would
give three distinct points in the same $\ZZ_n$-orbit lying on one
geodesic.
But the $\ZZ_n$-orbit of a point away from the $x_1$-axis lies on a
Euclidean circle in a plane perpendicular to the axis, and no three
distinct points of such an orbit lie on a hyperbolic geodesic.
This proves \eqref{disjint}.

It follows that the set 
\[
\bigcup_{0\leq i<j<n}(\tilde C_i\cap\tilde C_j)
\]
consists of $n(n-1)$ points.
Since the $\ZZ_n$-action is free on this set, these points form exactly
$(n-1)$ $\ZZ_n$-orbits.
Thus $C$ has exactly $(n-1)$ ordinary nodes as its only singularities.

It remains to determine which of these nodes are real.
Let $z\in\tilde C_i\cap\tilde C_j$.
The node $\Phi(z)$ is real precisely when $z$ and $\ol z$ belong to the
same $\ZZ_n$-orbit.
So suppose that $\ol z=R^k(z)$ for some $0<k<n$.
After changing to the affine chart obtained by replacing $\zeta,\eta$
with $\zeta\inv,\eta\inv$ if necessary, write $z=(\zeta,\eta)$.
Then, with $\omega=e^{\frac{2\pi i}{n}}$, we have
\[
(\ol\eta,\ol\zeta)=(\omega^k\zeta,\omega^{-k}\eta),
\]
and hence $\omega^{2k}\zeta=\zeta$.
Since $\zeta=0$ would imply $\eta=0$, contradicting the absence of real
points on $\tilde C_i$, we have $\zeta\neq0$.
Thus $\omega^{2k}=1$, so $n$ is even and $k=n/2$.

In this case,
$R^{\frac n2}(z)=\ol z$ belongs both to
$\tilde C_i\cap\tilde C_j$ and to
$\tilde C_{i+\frac n2}\cap\tilde C_{j+\frac n2}$.
By \eqref{disjint}, this implies
$
\{i,j\}=\{i+(n/2),j+(n/2)\},
$
and hence $|i-j|=\frac n2$ modulo $n$.
Conversely, if $j=i+\frac n2$, then $R^{\frac n2}$ preserves the two-point set
$\tilde C_i\cap\tilde C_j$ and has no fixed point on it, so it exchanges
the two points.
Thus $R^{\frac n2}(z)=\ol z$, and the corresponding node is real.
As $i$ varies, these intersection points form a single $\ZZ_n$-orbit.
Therefore, all nodes are non-real when $n$ is odd, whereas exactly one
of them is real when $n$ is even.
\proofend

\medskip
Thus, the structure of all twistor lines on $Q/\ZZ_n$ which correspond to points of $\BB/\ZZ_n$ is now well understood. We next see that if {\em $n$ is even}, then $Q/\ZZ_n$ has another family of real minitwistor lines which are the images of {\em non-real} $(1,1)$-curves on $Q$ but whose EW space is again the hyperbolic orbifold $\BB/\ZZ_n$. 

\begin{proposition}\label{p:tw0}
Assume $n$ is even and 
consider the family of curves in $Q/\ZZ_n$, whose members consist of
the images of all $(1,1)$-curves on $Q$ defined by the equation of the form
\begin{align}\label{ab2}
\zeta\eta+a\zeta-\ol a\eta+ b=0,
\qquad
a\in\CC,\,b\in\RR,\quad |a|^2+b< 0.
\end{align}
Each member of this family on $Q/\ZZ_n$ is real, and these curves constitute a family
of minitwistor lines in the sense that they determine an EW
orbifold isomorphic to the hyperbolic orbifold $\BB/\ZZ_n$.
\end{proposition}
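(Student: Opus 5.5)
Write $\tilde C_{a,b}$ for the curve \eqref{ab2}. The plan is to reduce everything to the standard family via the automorphism $R^{n/2}$, which acts on $Q$ by $(\zeta,\eta)\mapsto(-\zeta,-\eta)$ since $\omega^{n/2}=-1$.

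\emph{Step 1: reality.} Using \eqref{rs0}, substitute $(\ol\eta,\ol\zeta)$ into \eqref{ab2} and conjugate. The result is $\zeta\eta-a\zeta+\ol a\eta+b=0$, that is, the equation of $\tilde C_{-a,b}$. On the other hand, applying $R^{n/2}$ to $\tilde C_{a,b}$ also replaces $a$ by $-a$, because the linear terms change sign and $\zeta\eta$ does not. Hence
$$
\sigma(\tilde C_{a,b})=R^{n/2}(\tilde C_{a,b}),
$$
so $\Phi(\tilde C_{a,b})$ is $\sigma$-invariant. When $a\neq0$ the curve itself is non-real. When $a=0$ it is $\zeta\eta=-b$ with $-b>0$; this is $\ZZ_n$-invariant, and its image is again a multiple curve.

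\emph{Step 2: identification with the standard family.} The key is to find an automorphism $\tau$ of $Q$ that commutes with the $\ZZ_n$-action and carries the family \eqref{ab1}, together with \eqref{mult}, onto the family \eqref{ab2}. The natural choice is $\tau(\zeta,\eta)=(\zeta,-\eta)$ (equivalently $(\lambda\zeta,\mu\eta)$ with $\mu/\lambda=-1$). It is diagonal, so it commutes with $R_\theta$. It sends $\zeta\eta+a\zeta+\ol a\eta+b=0$ to $-\zeta\eta+a\zeta-\ol a\eta+b=0$, i.e.\ $\zeta\eta-a\zeta+\ol a\eta-b=0$. This is \eqref{ab2} with parameters $(-a,-b)$, and the condition $|a|^2-b<0$ becomes $|{-a}|^2+(-b)<0$, as required.

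Thus $\tau$ gives a biholomorphism $\tilde C\mapsto\tau(\tilde C)$ between the two parameter spaces, and it descends to an automorphism $\bar\tau$ of $Q/\ZZ_n$ mapping the standard family onto the new one. The point is that $\bar\tau$ intertwines $\sigma$ with a second real structure $\sigma'=\bar\tau\sigma\bar\tau^{-1}$. Upstairs, $\sigma'=\tau\sigma\tau^{-1}$ sends $(\zeta,\eta)\mapsto(-\ol\eta,-\ol\zeta)=R^{n/2}\circ\sigma$, and $R^{n/2}$ becomes trivial on $Q/\ZZ_n$. Hence $\sigma'=\sigma$ on the quotient, which is exactly where evenness of $n$ is used. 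Therefore $\bar\tau$ is a real automorphism of $(Q/\ZZ_n,\sigma)$ mapping one family onto the other.

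\emph{Step 3: the EW structure.} Because $\bar\tau$ is a real biholomorphism carrying one family of curves to the other, it preserves normal bundles and the incidence data. The EW orbifold induced by the new family is therefore isomorphic, via the parameter map $(a,b)\mapsto(-a,-b)$, to the one from the standard family, namely $\BB/\ZZ_n$. This also settles that the members are minitwistor lines in the required sense: their singularities (nodes, as in Proposition~\ref{p:mtl1}, and the multiple members at $a=0$) are transported by $\bar\tau$.

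The main obstacle is Step 2, namely checking that $\tau$ descends compatibly with the real structure. The decisive computation is $\tau\sigma\tau^{-1}=R^{n/2}\sigma$ on $Q$. If this approach fails for some reason, the fallback is to verify Step 3 directly by computing the normal bundle $\mathcal O(2)$ upstairs and the reality of the deformation space.
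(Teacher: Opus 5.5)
Your argument is correct and is the paper's proof. The paper uses the same automorphism $\Xi(\zeta,\eta)=(\zeta,-\eta)$: it commutes with $\ZZ_n$ and satisfies $\sigma\circ\Xi=R^{n/2}\circ\Xi\circ\sigma$, which is your identity $\tau\sigma\tau^{-1}=R^{n/2}\sigma$, so it descends to a real automorphism of $Q/\ZZ_n$ carrying \eqref{ab1} to \eqref{ab2} via $(a,b)\mapsto(-a,-b)$. Your direct reality check in Step 1 and your explicit inclusion of the $a=0$ members via \eqref{mult} are harmless additions.

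The only slip is the parenthetical ``equivalently $(\lambda\zeta,\mu\eta)$ with $\mu/\lambda=-1$''. This holds only for real $\lambda$: for example, $\lambda=i$ gives the rotation $R_{\pi/2}$, which preserves the ordinary family. You never use it, so the proof is unaffected.
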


\proof
Consider the automorphism
$\Xi:Q\longrightarrow Q$ defined by
$\Xi(\zeta,\eta)=(\zeta,-\eta)$.
It commutes with the $\ZZ_n$-action and, since
$\sigma\circ\Xi=R^{\frac n2}\circ\Xi\circ\sigma$,
it induces a real automorphism of $Q/\ZZ_n$.
Moreover, $\Xi$ maps a curve of the form \eqref{ab1} with
coefficients $(a,b)$ to a curve of the form \eqref{ab2}.
Indeed, writing its coefficients as
$(a',b')=(-a,-b)$, the condition $|a|^2-b<0$ becomes
$|a'|^2+b'<0$, which is precisely the condition in
\eqref{ab2}.
\proofend

%

\begin{definition}\label{d:ord}
{\em
We call a non-multiple minitwistor line on $Q/\ZZ_n$
{\em ordinary} if it is the image of a curve of the form
\eqref{ab1}. When $n$ is even, we call it {\em twisted} if it is
the image of a curve of the form \eqref{ab2}.}
\end{definition}

We will need to take the twisted family into account when we prove the main result of this paper when $n$ is even.

\begin{proposition}\label{p:mtl2}
Let $C\subset Q/\ZZ_n$ be an ordinary or twisted minitwistor line. 
Then $C$ is a rational curve having exactly $(n-1)$ ordinary nodes as its only singularities, and they are away from the four singular points of $Q/\ZZ_n$.
If $n$ is odd, then all of them are non-real, while if $n$ is even, exactly one of them is real.
\end{proposition}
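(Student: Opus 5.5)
The ordinary case is exactly Proposition~\ref{p:mtl1}, so only twisted lines need an argument, and $n$ is then even. The plan is to transport the statement through the automorphism $\Xi(\zeta,\eta)=(\zeta,-\eta)$ from the proof of Proposition~\ref{p:tw0}.

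First, $\Xi$ commutes with $R$, so it descends to a biholomorphism $\bar\Xi$ of $Q/\ZZ_n$ with $\Phi\circ\Xi=\bar\Xi\circ\Phi$. By the proof of Proposition~\ref{p:tw0}, every twisted line is $\bar\Xi(C')$ for an ordinary line $C'=\Phi(\tilde C')$, where $\tilde C'$ is of the form \eqref{ab1}. Since $\bar\Xi$ is a biholomorphism, $\bar\Xi(C')$ is rational with exactly $(n-1)$ ordinary nodes and no other singularities. Moreover, $\Xi$ permutes the four fixed points \eqref{sing1}; in fact it fixes each of them. Hence $\bar\Xi$ preserves the singular set of $Q/\ZZ_n$, and the nodes stay away from it.

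The reality of the nodes is the only step needing care, because $\bar\Xi$ does not commute with $\sigma$ on the nose. It satisfies $\sigma\circ\Xi=R^{\frac n2}\circ\Xi\circ\sigma$ on $Q$, so on the quotient $\bar\sigma\circ\bar\Xi=\bar\Xi\circ\bar\sigma$, since $R^{\frac n2}$ acts trivially on $Q/\ZZ_n$. Thus $\bar\Xi$ is a real automorphism of $Q/\ZZ_n$ and maps real points to real points bijectively. Consequently the real nodes of $\bar\Xi(C')$ are the images of the real nodes of $C'$, and by Proposition~\ref{p:mtl1} there is exactly one, as $n$ is even.

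As a cross-check, one could also redo the direct argument of Proposition~\ref{p:mtl1} for the twisted lines. Write $z\in\tilde C_i\cap\tilde C_j$ and ask when $\sigma(z)=R^k(z)$. This gives $\omega^{2k}\zeta=\zeta$, so $k=n/2$ (using $\zeta\ne0$). The disjointness \eqref{disjint} then forces $j=i+\frac n2$. However, \eqref{disjint} was proved via hyperbolic geodesics, which are not available for non-real curves. So transporting via $\Xi$ is the cleaner route, and I expect no real obstacle beyond verifying the commutation relation with $\sigma$ on the quotient.
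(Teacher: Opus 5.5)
Your proposal is correct and follows the paper's proof. The ordinary case is Proposition~\ref{p:mtl1}, and the twisted case is transported from it by the automorphism of $Q/\ZZ_n$ induced by $\Xi$, which is real because $R^{n/2}$ acts trivially on the quotient. Your checks that $\Xi$ commutes with $\sigma$ on the quotient and fixes the four points just spell out what the paper compresses into the phrase ``$\Xi$ is a real automorphism.''
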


\proof
If $C$ is ordinary, this is exactly Proposition \ref{p:mtl1}.
If $C$ is twisted, then the image $\Xi(C)$ is ordinary, so it satisfies the required property. Since $\Xi$ is a real automorphism, the same property holds for $C$. \proofend

\section{A degeneration of minitwistor spaces}\label{s:degn}
In this section, we first realize the minitwistor space $Q/\ZZ_n$ of the hyperbolic orbifold $\BB/\ZZ_n$ as a branched double cover of a cone over a rational normal curve in $\PP^n$.
For this, we consider the linear system $|\ms O(n,n)|$ of bidegree $(n,n)$ on $Q\simeq\qdr$.
The $\ZZ_n$-action on $Q$ generated by the rotation $R$ of order $n$ naturally lifts to this linear system, 
and using the affine coordinates $\zeta,\eta$, the subsystem of $|\ms O(n,n)|$ consisting of $\ZZ_n$-invariant elements is generated by the following $(n+3)$ monomials:
\begin{align}\label{q1}
(\zeta\eta)^k, \,\,0\le k\le n, \qandq \zeta^n,\eta^n.
\end{align}
This subsystem induces a holomorphic map from $Q$ to $\PP^{n+2}$, which is $\ZZ_n$-invariant.
Hence, it descends to a map from the quotient $Q/\ZZ_n$ to $\PP^{n+2}$, which gives a projective embedding $Q/\ZZ_n\subset\PP^{n+2}$.
Thus, the quotient map $\Phi:Q\lras Q/\ZZ_n$ is realized by this $\ZZ_n$-invariant subsystem of $|\ms O(n,n)|$.

Next, the above $\ZZ_n$-action and the switching involution $\iota:(\zeta,\eta)\longmapsto
(\eta,\zeta)$ generate the dihedral group $D_n$ of order $2n$.
The subsystem of $|\ms O(n,n)|$ consisting of $D_n$-invariant elements is generated by the following $(n+2)$ polynomials:
\begin{align}\label{q2}
(\zeta\eta)^k, \,\,0\le k\le n, \qandq \zeta^n+\eta^n.
\end{align}
This also induces a $D_n$-invariant holomorphic map, from $Q$ to $\PP^{n+1}$ this time.
If $\Lmd\subset\PP^n$ denotes a rational normal curve,
then the image of this map is the cone $C(\Lmd)\subset\PP^{n+1}$ over $\Lmd$.

Comparing the sets of generators \eqref{q1} and \eqref{q2}, we obtain a commutative diagram of meromorphic maps
\begin{equation}\label{d1}
\begin{tikzcd}[column sep=large, row sep=large]
Q/\ZZ_n \arrow[r, hookrightarrow] \arrow[d, "\Pi_0"']
  & \PP^{n+2} \arrow[d] \\
C(\Lmd) \arrow[r, hookrightarrow]
  & \PP^{n+1},
\end{tikzcd}
\end{equation}
where the right vertical map is the linear projection from the point $(0:\dots:0:1:-1)\in\PP^{n+2}$, which is away from the image of $Q/\ZZ_n$, and $\Pi_0$ is its restriction to the image of $Q/\ZZ_n$ into $\PP^{n+2}$.
We may use 
\begin{align}\label{zv}
z:=\zeta\eta \qandq v:=\zeta^n+\eta^n
\end{align}
as affine coordinates on the cone, and 
from these, we obtain the equation $\zeta^{2n}-v\zeta^n+z^n=0$.
Taking the discriminant, we obtain:
\begin{proposition}\label{p:br1}
In the above coordinates $(z,v)$ on the cone $C(\Lmd)$, the map $\Pi_0:Q/\ZZ_n\lras C(\Lmd)$ is a double covering with branch given by the following double cover of $\Lmd$:
\begin{align}\label{discr}
\Sigma_0:=\big\{v^2=4z^n\big\}.
\end{align}
This curve has exactly two singularities, which are $A_{n-1}$-singularities lying over $z=0,\infty\in\Lmd$.
If $n$ is odd, then this is an irreducible rational curve, while if $n$ is even, then this consists of two smooth rational curves touching at the two singularities.
\end{proposition}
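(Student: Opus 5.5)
The plan is to use the explicit affine coordinates $(z,v)=(\zeta\eta,\zeta^n+\eta^n)$ from \eqref{zv} and treat $\Pi_0$ fibrewise. For fixed $(z,v)$ on the cone, the preimages in $Q$ under the $D_n$-invariant map are the pairs $(\zeta,\eta)$ with $\zeta\eta=z$ and $\zeta^n+\eta^n=v$. Putting $s=\zeta^n$ and $t=\eta^n$, we get $s+t=v$ and $st=z^n$. So $s,t$ are the two roots of $w^2-vw+z^n=0$; this is the equation $\zeta^{2n}-v\zeta^n+z^n=0$ displayed above.

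First I will check that $\Pi_0$ has degree two. A point of $Q/\ZZ_n$ away from the fixed points is determined by the $\ZZ_n$-invariants $z$, $\zeta^n$ and $\eta^n$ (these together with powers of $z$ generate \eqref{q1}). Hence the fibre of $\Pi_0$ over $(z,v)$ consists of the ordered pairs $(\zeta^n,\eta^n)=(s,t)$ or $(t,s)$, which are interchanged by the involution induced by $\iota$. So $\Pi_0$ is the quotient of $Q/\ZZ_n$ by $\iota$, a double cover, and the two sheets meet exactly when $s=t$. That is where the discriminant $v^2-4z^n$ vanishes, so the branch curve is $\Sigma_0=\{v^2=4z^n\}$. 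I will also check the chart at infinity, replacing $\zeta,\eta$ by $\zeta\inv,\eta\inv$, equivalently the $\ZZ_n$-invariant coordinates near the vertex and near $z=\infty$. By the symmetry $(\zeta,\eta)\mapsto(\zeta\inv,\eta\inv)$ the same equation appears there, so no extra branch components (such as the hyperplane at infinity or the vertex) are added. This boundary check is the step I expect to need the most care. The cleanest route is to note that $\iota$ acts on $Q/\ZZ_n$ with fixed curve the image of the diagonal $\{\zeta=\eta\}$, together with its $\ZZ_n$-translates $\{\zeta=\omega^k\eta\}$, and that these map onto $\{v^2=4z^n\}$ via $v=2\zeta^n$ and $z=\zeta^2\omega^{-k}$.

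Next I will read off the geometry of $\Sigma_0$ as a double cover of $\Lmd$ via $z$. Over $z\ne0,\infty$ it is unramified, with two smooth sheets $v=\pm2z^{n/2}$. If $n$ is even, $v=\pm2z^{n/2}$ are two single-valued sections, giving two smooth rational curves. These meet only where $z^{n/2}=0$ or $\infty$, with local equation $v^2=4z^n$, i.e.\ contact of order $n/2$, which is an $A_{n-1}$-singularity. If $n$ is odd, the monodromy of $z^{n/2}$ around $z=0$ is nontrivial, so the curve is irreducible. Its normalization $u\mapsto(u^2,2u^n)$ is $\PP^1$, so the curve is rational. Near $z=0$ the local equation $v^2=4z^n$ is again $A_{n-1}$, a cusp-type unibranch singularity.

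For the point at infinity, I will use the cone coordinates $z'=1/z$ and $v'=v/z^{n}$ (weight considerations on $C(\Lmd)\subset\PP^{n+1}$, with $v$ of the same degree as $z^n$ but at the fibre direction). The equation becomes $v'^2=4z'^n$ up to a unit, giving the symmetric $A_{n-1}$-singularity over $z=\infty$. Away from $z=0,\infty$ the curve is smooth because $\partial_v(v^2-4z^n)=2v\neq0$ there.
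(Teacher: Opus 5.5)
Your proposal is correct and follows essentially the paper's route. The paper also reads off $\Sigma_0$ as the discriminant of $\zeta^{2n}-v\zeta^n+z^n=0$, viewed as a quadratic in $\zeta^n$, and finds the second $A_{n-1}$-point in the chart $\tilde z=z\inv$, $\tilde v=z^{-n}v$. Your identification of the branch locus with the image of the $\iota$-fixed curves $\{\zeta=\omega^k\eta\}$ adds a useful check that neither the vertex nor the line over $z=\infty$ contributes.

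One small correction: $R$ shifts $k$ by $2$, so for $n$ even the curves with $k$ odd are not $\ZZ_n$-translates of the diagonal. They form the second orbit, and they are exactly what produce the component $v=-2z^{n/2}$. You must therefore take all $0\le k<n$, as your formulas in fact do.
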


\proof
The curve \eqref{discr} has an $A_{n-1}$-singularity at the origin.
The other singularity can be seen by rewriting \eqref{discr} using a complementary affine chart $\tilde z=z\inv$ and $\tilde v = z^{-n}v$ on the cone.
The remaining properties are immediate.\proofend

\medskip
The real structure on $C(\Lmd)$ is naturally induced from that on $Q/\ZZ_n$ , and the two $A_{n-1}$-singularities of the branch curve $\Sigma_0$ are real. 
The vertex of $C(\Lmd)$ does not belong to $\Sigma_0$.
The inverse image of the vertex under $\Pi_0$ consists of the two
$A_{n,1}$-singularities of $Q/\mathbb Z_n$, and they are exchanged by $\sigma$.

Next, we recall from \cite{Hi25} and \cite{H26} the construction of a compact minitwistor space that arises from a toric ALE gravitational instanton of type $A_{2n-1}$. 
The 3-dimensional twistor space of such an instanton of type $A_{2n-1}$ is obtained from an algebraic variety 
\begin{align}\label{tw1}
xy = (z-a_1u)(z-a_2u)\dots (z-a_{2n}u),
\quad a_1<a_2<\dots<a_{2n}.
\end{align}
More precisely, this is considered as an equation in the total space of the rank-3 vector bundle
$\ms O(n)\oplus\ms O(n)\oplus\ms O(2)\lras\PP^1$, where $(x,y,z)$ are fiber coordinates of this bundle, $u$ is an affine coordinate on $\PP^1$, and $a_1,a_2,\dots,a_{2n}$ are real numbers which are determined by the monopole points in $\RR^3$ lying on a straight line.
By taking an appropriate resolution of the singularities of the variety \eqref{tw1}, 
one obtains the twistor space of the toric gravitational instanton of type $A_{2n-1}$ \cite{Hi78}.

The projective model \eqref{tw1} of the twistor space is invariant under the torus action 
\begin{align}\label{act1}
(x,y,z,u)\stackrel{(s,t)}\longmapsto (s^ntx,s^nt\inv y, sz,su),\quad(s,t)\in T^2=U(1)\times U(1).
\end{align}
The $S^1$-subgroup $\{s=1\}\subset T^2$ preserves each fiber of the projection to $\PP^1$, and this corresponds to the $S^1$-action that preserves all complex structures of the hyperK\"ahler structure.
On the other hand, the $S^1$-action of the subgroup $\{t=1\}\subset T^2$ is induced from the scalar multiplication on $\CC^2$ from which the $A_{2n-1}$-singularity arises as the quotient by a cyclic group of order $2n$ in $\SU(2)$.
We call this the {\em scalar $S^1$-action}.
At the ALE end, the scalar circle action induces the
Hopf fibration
\[
S^3/\ZZ_{2n}\longrightarrow S^2/\ZZ_n,
\]
where $\ZZ_n$ is the image of $\ZZ_{2n}\subset SU(2)$ in
$SO(3)$. Thus, after completing the three-dimensional
quotient at the ALE end, the added orbifold point has link
$S^2/\ZZ_n$, consistently with the local orbifold structure
of $\BB/\ZZ_n$.

From Jones-Tod \cite{JT85}, in suitable situations, a minitwistor space can be obtained from a 3-dimensional twistor space by taking the quotient by a $\mathbb C$- or $\mathbb C^*$-action. Such an action arises as the complexification of an $\mathbb R$- or $S^1$-action naturally lifted from an action preserving the (anti-)self-dual conformal structure.

Hitchin obtained a minitwistor space from the scalar $S^1$-action on (the resolution of) the twistor space \eqref{tw1}. 
More precisely, the fiber over the point $u=1\in\PP^1$, which is 
\begin{align}\label{tw2}
xy = (z-a_1)(z-a_2)\cdots (z-a_{2n}),
\end{align}
can be viewed as the orbit space of the complexified scalar $\CC^*$-action.
This surface admits a compactification in the $\PP^2$-bundle 
$\PP(\ms O(n)\oplus\ms O(n)\oplus\ms O)\lras\PP^1$
by still thinking $x$ and $y$ as fiber coordinates on the line bundle $\ms O(n)$, while $z$ is now regarded as an affine coordinate on the base $\PP^1$. 
We are still assuming $a_1<a_2<\dots<a_{2n}$.
The compactification is smooth.
As in \cite{H26}, we denote by $\tilde{\ms T}$ this compact complex surface. 
Although the fiber over $u=1$ is not real under the canonical real structure on the twistor space, 
$\tilde{\ms T}$ has a natural real structure, by taking the composition with the identification of the fibers over the two points $u=\pm1$ that is realized by the orbits of the scalar $\CC^*$-action.
In the above coordinates, it is explicitly given by 
\begin{align}\label{rs1}
(x,y,z)\longmapsto \big((-1)^n\ol y,(-1)^n\ol x,\ol z\big).
\end{align}
Further, the subgroup $\{s=1\}\subset T^2$ in \eqref{act1}
induces an $S^1$-action on $\tilde{\ms T}$.

In the following, to avoid the appearance of $(-1)^n$, we redefine $y$ to be $(-1)^ny$. Then the equation \eqref{tw2} becomes
\begin{align}\label{tw3}
xy = (-1)^n(z-a_1)(z-a_2)\cdots (z-a_{2n}),
\end{align}
while the real structure \eqref{rs1} is changed to a simple form as
\begin{align}\label{rs2}
(x,y,z)\longmapsto \big(\ol y,\ol x,\ol z\big).
\end{align}
The induced $S^1$-action is written as $(x,y,z)\longmapsto (tx,t\inv y,z)$, $t\in S^1$.

Through the compactification from the affine surface \eqref{tw2} or \eqref{tw3} to $\tilde{\ms T}$, three $\PP^1$ are attached. One of them is a (smooth) fiber over the point $z=\infty$, and the remaining two are sections of the projection to $\PP^1$ whose coordinate is $z$. 
The latter two are $(-n)$-curves in $\tilde{\ms T}$ and therefore each of them can be contracted to an $A_{n-1}$-singularity. 
As in \cite{H26} we denote by $\ms T$ the surface obtained by the contractions, and use this singular surface as a minitwistor space rather than $\tilde{\ms T}$.
The two $A_{n,1}$-singularities are mutually exchanged by $\sigma$.
For our purposes, it is useful that $\ms T$ admits a natural
realization as a double cover of the cone $C(\Lmd)$, as we
now explain.

Put $v=x+y$ and $w=x-y$. Then $(z,v)$ give coordinates on
the affine part of the cone $C(\Lmd)$, in agreement with the
notation \eqref{zv}. In these coordinates, equation
\eqref{tw3} becomes
\begin{align}\label{tw4}
w^2 = v^2 - 4(-1)^n(z-a_1)(z-a_2)\cdots (z-a_{2n}),
\end{align}
while the real structure \eqref{rs2} becomes $(v,w,z)\longmapsto (\ol v,-\ol w,\ol z)$.
The equation 
\begin{align}\label{he}
v^2=4(-1)^n(z-a_1)(z-a_2)\cdots (z-a_{2n})
\end{align}
 makes sense as the one defined in the total space of the line bundle $\ms O(n)\lras\PP^1$, namely the cone over a rational normal curve $\Lmd$ in $\PP^n$, and it defines a hyperelliptic curve of genus $(n-1)$.
Therefore, from \eqref{tw4}, the minitwistor space $\ms T$ has a structure of a double covering
\begin{align}\label{dc1}
\Pi:\ms T\lras C(\Lmd)
\end{align}
of the same cone $C(\Lmd)$ as $Q/\ZZ_n$, branched along the hyperelliptic curve \eqref{he}. 
We denote by $\Sigma$ this hyperelliptic curve.
This is of degree $2n$ in $\PP^{n+1}$ and invariant under the real structure $\sigma:(v,z)\longmapsto (\ol v,\ol z)$ induced from the above one.
The curve $\Lmd$, the cone $C(\Lmd)$ and the minitwistor space $\ms T$ are realized in $\PP^n$, $\PP^{n+1}$ and $\PP^{n+2}$ respectively.

The inverse image of the real circle $\Lmd\us=\{z\in\RR\}\cup\{\infty\}$ under the double covering map $\Sigma\lras\Lmd$ consists of $2n$ smooth circles, according as the sign of the right-hand side of \eqref{he}. Half of these circles consist of real points, and the remaining half consist of points whose $v$-coordinate is pure imaginary.
As in \cite{H26}, we call these {\em real circles} and {\em pure imaginary circles} respectively.
These are arranged alternately, and the intersections of two adjacent circles are exactly the ramification points of the double cover $\Sigma\lras\Lmd$.

The complement of the union of all real and pure imaginary circles in $\Sigma$ consists of four domains, and each of them is a fundamental domain of the action on $\Sigma$ by the group generated by the real structure and the hyperelliptic involution.
As in \cite[Section 6]{H26}, we call the closure of any one of these domains a {\em quarter} of the hyperelliptic curve $\Sigma$, and denote $\Sigma''$ for it.
This is a manifold with corners and is naturally identified with a {\em slice} of the $S^1$-action of the EW orbifold associated to the toric $A_{2n-1}$ gravitational instanton.
We use this for proving the convergence of the minitwistor lines.

The minitwistor space $\ms T$ depends on the real $2n$ parameters $a_1,\dots,a_{2n}$, arranged on $\RR$ in this order.
We next show that $\ms T$ converges to $Q/\ZZ_n$ in a certain limit of these $2n$ points:

\begin{proposition}
In the limit
\begin{align}\label{lim1}
a_1,\dots,a_n\lras 0\qandq
a_{n+1},\dots,a_{2n}\lras\infty,
\end{align}
the minitwistor space $\ms T$ arising as the scalar $S^1$-quotient of the
toric gravitational instanton of type $A_{2n-1}$ is deformed into
$Q/\ZZ_n$, the minitwistor space of the hyperbolic orbifold $\BB/\ZZ_n$.
\end{proposition}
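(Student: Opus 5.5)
Both $\ms T$ and $Q/\ZZ_n$ are double covers of the same cone $C(\Lmd)\subset\PP^{n+1}$, branched along a hyperplane-section-type curve in $|\ms O_{C(\Lmd)}(2)|$ (degree $2n$ in $\PP^{n+1}$). So the plan is to compare the branch curves and show that \eqref{he} converges, after rescaling, to $\Sigma_0=\{v^2=4z^n\}$ of Proposition~\ref{p:br1}. The double covers then converge as well, since a double cover of $C(\Lmd)$ branched along a member of the relevant linear system is determined by its branch curve. Concretely, $\ms T$ is the subvariety $w^2=v^2-4(-1)^n\prod(z-a_iu)$ of the weighted/projective model in $\PP^{n+2}$. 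Likewise $Q/\ZZ_n$ is $w^2=v^2-4z^n$ in suitable coordinates: from $\zeta^n+\eta^n=v$ and $\zeta^n\eta^n=z^n$ we get $(\zeta^n-\eta^n)^2=v^2-4z^n$, so we set $w=\zeta^n-\eta^n$. It therefore suffices to exhibit a family of coefficients tending to those of $z^n$.

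To do this, parametrize the degeneration \eqref{lim1} by writing $a_i=\epsilon\alpha_i$ for $i\le n$ and $a_{n+i}=\alpha_{n+i}/\epsilon$, with fixed $\alpha$'s, and let $\epsilon\to0^+$. The ordering $a_1<\dots<a_{2n}$ is preserved for small $\epsilon$ if $\alpha_1<\dots<\alpha_n$ and $0<\alpha_{n+1}<\dots<\alpha_{2n}$. Then
\[
(-1)^n\prod_{i=1}^{2n}(z-a_i)=\prod_{i\le n}(z-\epsilon\alpha_i)\cdot\prod_{i>n}\big(\alpha_{i}/\epsilon - z\big).
\]
The second factor is $\epsilon^{-n}\prod_{i>n}\alpha_i\,(1-\epsilon z/\alpha_i)$. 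We rescale the fiber coordinates by $v\mapsto \lambda v$ and $w\mapsto\lambda w$ with $\lambda^2=\epsilon^{-n}\prod_{i>n}\alpha_i$. This is a linear automorphism of $\PP^{n+2}$ preserving the cone and commuting with the real structure, because $\lambda$ is real. After it, the equation becomes
\[
w^2=v^2-4\prod_{i\le n}(z-\epsilon\alpha_i)\prod_{i>n}(1-\epsilon z/\alpha_i),
\]
whose coefficients are polynomial in $\epsilon$. At $\epsilon=0$ it is exactly $w^2=v^2-4z^n$, i.e.\ $Q/\ZZ_n$ with branch curve $\Sigma_0$.

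This gives a flat family $\mathscr X\to\{|\epsilon|<\epsilon_0\}$ of surfaces in $\PP^{n+2}$ (a hypersurface of the cone over $C(\Lmd)$ in the projective model). Its fibers for $\epsilon>0$ are isomorphic to $\ms T$ with parameters $a_i(\epsilon)$, and its central fiber is $Q/\ZZ_n$. The family is compatible with the real structures and with the $S^1$-actions, which act by $(v,w)$-independent rotation; in the $(x,y)$ variables this is $x\mapsto tx$, $y\mapsto t^{-1}y$, matching $(\zeta^n,\eta^n)$ under $R_\theta$-equivariant identification. I would check the real structures explicitly: on $\ms T$ it is $(v,w,z)\mapsto(\ol v,-\ol w,\ol z)$, and on $Q/\ZZ_n$, $\sigma$ sends $\zeta^n-\eta^n$ to $\ol{\eta}^n-\ol\zeta^n=-\ol w$, which agrees.

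The main obstacle is making the behavior at infinity and at the vertex uniform. One must verify that the homogenized equation in the chart $\tilde z=z^{-1}$ also has coefficients polynomial in $\epsilon$. It does: there the factor $\prod(1-\epsilon z/\alpha_i)$ becomes $\prod(\tilde z-\epsilon/\alpha_i)$, up to $\tilde z^{\,n}$ bookkeeping, and the symmetric roles of $0$ and $\infty$ make this work. One must also check that the vertex stays off the branch curve, so that the fibers acquire exactly the two $A_{n,1}$-points over the vertex. The family is then flat and fiberwise the contracted surfaces $\ms T$ are matched, not only $\tilde{\ms T}$. For $\epsilon\neq0$ the $A_{n-1}$-singularities of $\ms T$ sit where the contracted $(-n)$-curves were; I would confirm these are smoothed into those of $\Sigma_0$ at $z=0,\infty$.
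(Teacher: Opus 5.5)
Your proposal is correct and follows the paper's proof. Both arguments rescale $v$ (and $w$) by $\sqrt{A}$, $A=a_{n+1}\cdots a_{2n}$, so that the branch curve \eqref{he} converges to $\Sigma_0=\{v^2=4z^n\}$, which is the branch curve of $\Pi_0$ in Proposition~\ref{p:br1}. Your explicit family, polynomial in $\epsilon$ and checked also in the chart $\tilde z=z^{-1}$, makes precise the uniformity near $z=\infty$ that the paper's $1+o(1)$ estimate leaves implicit.

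Drop your final remark, however, because it is wrong. The contracted $(-n)$-curves give the two $A_{n,1}$-points over the vertex of the cone, and these persist unchanged in the limit, as you yourself note just before. The $A_{n-1}$-points of $Q/\ZZ_n$ over $z=0,\infty$ are newly created by the coalescing branch points. They are not limits of any singularities of $\ms T$, which is smooth over $z=0,\infty$.
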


\proof
In the limit $a_{n+1},\dots,a_{2n}\lras\infty$, we have
\[
\prod_{j=n+1}^{2n}(z-a_j)
=
(-1)^n A(1+o(1)),\qquad
A:=a_{n+1}\cdots a_{2n}>0.
\]
After replacing \(v\) by \(v/\sqrt A\) and letting $a_1,\dots,a_n\lras 0$, the equation \eqref{he}
therefore converges to
\[
v^2=4z^n.
\]
This is exactly the equation \eqref{discr} of the branch divisor $\Sigma_0$ of the double cover $\Pi_0:Q/\ZZ_n\lras C(\Lmd)$ given in Proposition \ref{p:br1}.
This implies that $Q/\ZZ_n$ is obtained as a deformation of $\ms T$ through the limit \eqref{lim1}.
\proofend

\medskip
This proposition, however, does not by itself show that the former EW space deforms to the latter hyperbolic orbifold. For this, one also needs to show that the minitwistor lines in the former minitwistor space converge to those in $Q/\ZZ_n$.
In the rest of this paper, we show that this is really the case.

\section{Symmetric minitwistor lines on the limiting space}

In \cite{H26}, to each point $q$ of a quarter $\Sigma''$ of
the hyperelliptic branch curve $\Sigma$, we associate a real
hyperplane $h\subset\PP^{n+1}$ whose only non-tangential
intersection points with $\Sigma$ are $q$ and $\ol q$.
(Recall that $\deg\Sigma=2n$ in $\PP^{n+1}$.)
The inverse image $\Pi\inv(h)$ of such $h$ is a minitwistor line in $\ms T$ which is invariant under the covering transformation of $\Pi$. 
Hence, we obtain a 2-dimensional family of minitwistor lines parameterized by $\Sigma''$.
The full 3-dimensional family of minitwistor lines on $\ms T$ is obtained by 
moving these symmetric ones under the $S^1$-action of $\{s=1\}\subset T^2$.
In this section, we shall see that an analogous situation occurs in the limit
$Q/\ZZ_n$, with respect to the double covering
$\Pi_0:Q/\ZZ_n\lras C(\Lmd)$.
The following definition also applies to multiple and
boundary minitwistor lines.

\begin{definition}\label{d:sym}
{\em
A minitwistor line in either $\ms T$ or $Q/\ZZ_n$ is called
{\em symmetric} if it is invariant under the covering
transformation of $\Pi$ or $\Pi_0$, respectively.
}
\end{definition}
The covering transformation of $\Pi_0$ is induced from the switching involution $\iota$ on $Q$.
In fact, as $R\circ\iota=\iota\circ R^{-1}$, $\iota$ maps $\ZZ_n$-orbits to $\ZZ_n$-orbits and hence it descends to an involution on the quotient $Q/\ZZ_n$. We use the same symbol for it.
Then comparing the generators given in \eqref{q1} and
\eqref{q2}, this induced involution is precisely the
covering transformation of $\Pi_0$.

We recall that the hyperbolic orbifold $\BB/\ZZ_n$ admits an effective $S^1$-action induced from that on $\BB$ as written in \eqref{Rn} or \eqref{S1act}. 

\begin{proposition}\label{p:sym}
(i)
Every $S^1$-orbit in the ordinary family contains a unique symmetric representative of the form
\begin{align}\label{ab3}
\zeta\eta+a(\zeta+\eta)+b=0,
\qquad
a,b\in\RR,\quad a> 0,\quad a^2- b<0.
\end{align}
(ii)
When $n$ is even, every $S^1$-orbit in the twisted family contains a unique symmetric representative of the form
\begin{align}\label{ab4}
\zeta\eta+a(\zeta-\eta)+b=0,
\qquad
a,b\in\RR,\quad a>0,\quad a^2+b<0.
\end{align}
\end{proposition}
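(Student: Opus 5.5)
The plan is to work upstairs on $Q$ and use two facts: how the $S^1$-action and the switching involution $\iota$ act on the coefficients of \eqref{ab1} and \eqref{ab2}, and that a curve $C=\Phi(\tilde C)$ in $Q/\ZZ_n$ is determined by the $\ZZ_n$-orbit of $\tilde C$. By \eqref{Rn}, the $S^1$-action on $\BB/\ZZ_n$ comes from the action $R_\theta$ on $Q$. By \eqref{S1act}, $R_\theta$ sends the curve \eqref{ab1} with coefficients $(a,b)$ to the one with coefficients $(e^{-i\theta}a,b)$. Substituting into \eqref{ab2}, it likewise sends the twisted curve with coefficients $(a,b)$ to $(e^{-i\theta}a,b)$, since $\ol{e^{-i\theta}a}=e^{i\theta}\ol a$ matches the $\eta$-coefficient. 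Hence the $S^1$-orbit of $C=\Phi(\tilde C)$ consists of the images of the curves with coefficients $(a',b)$ where $|a'|=|a|$. Both defining inequalities depend only on $|a|^2$ and $b$, so they are preserved.

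For (i), take an ordinary line $C=\Phi(\tilde C)$, with $\tilde C$ given by \eqref{ab1}, so $a\neq0$. Choosing $\theta=\arg a$ moves $C$ within its orbit to the image of the curve with coefficients $(|a|,b)$. This curve has the form \eqref{ab3}, with $a=|a|>0$ and $a^2-b<0$. It is symmetric: by the discussion after Definition~\ref{d:sym}, the covering transformation of $\Pi_0$ is induced by $\iota$. Since $\iota$ swaps $\zeta$ and $\eta$, it sends the curve with coefficients $(a,b)$ to the one with $(\ol a,b)$, and this fixes $\tilde C$ itself when $a$ is real. Uniqueness is also short. Two curves of the form \eqref{ab3} in the same $S^1$-orbit have equal $b$ and equal $|a|$, hence equal $a$ because both are positive. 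So they are the same curve upstairs, and therefore the same curve in $Q/\ZZ_n$.

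For (ii), with $n$ even, the same argument applies. Rotating by $\theta=\arg a$ gives the curve $\zeta\eta+|a|\zeta-|a|\eta+b=0$, which has the form \eqref{ab4}. Uniqueness follows exactly as before. The symmetry check is different, because $\iota$ sends $\zeta\eta+a\zeta-\ol a\eta+b=0$ to the curve with coefficient $a$ replaced by $-\ol a$. For real $a>0$ this is the curve with coefficient $-a=e^{-i\pi}a$, which is $R^{n/2}(\tilde C)$ since $R^{n/2}$ multiplies the coefficient by $\omega^{-n/2}=-1$. So $\iota(\tilde C)$ lies in the $\ZZ_n$-orbit of $\tilde C$, and $\iota$ fixes $\Phi(\tilde C)$. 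This is the one point where the parity of $n$ is essential, consistent with the twisted family existing only for even $n$.

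The main obstacle is the boundary case $a=0$. It is not allowed in (ii) but is permitted by \eqref{ab2}, and it gives the $\ZZ_n$-invariant curves $\zeta\eta+b=0$ with $b<0$. Their images are multiple curves rather than twisted lines in the sense of Definition~\ref{d:ord}, which concerns non-multiple members, and under $\Xi$ they correspond to the multiple lines \eqref{mult}. So they fall outside the families considered in (ii), and I would state this explicitly. The only other subtlety is that ``unique'' should be read as unique among representatives of the stated normalized forms. Other symmetric members of an orbit do exist, for instance curves with $\arg a\in(\pi/n)\ZZ$, but none of them has positive real $a$ unless it equals the chosen one.
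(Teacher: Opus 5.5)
Your proposal is correct and follows the paper's own proof. Both rotate the phase of $a$ to make it positive real, check $\iota$-invariance directly in the ordinary case and $\iota(\tilde C)=R^{n/2}(\tilde C)$ in the twisted case, and deduce uniqueness from the uniqueness of that phase choice. Your explicit coefficient computations, the remark that $a\neq0$ because twisted lines are non-multiple, and your reading of ``unique'' as unique among the normalized forms are consistent with the paper.
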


\proof
In both the ordinary and twisted families, the $S^1$-action rotates the
coefficient $a$ while leaving $b$ unchanged; see \eqref{S1act}.
Since $a\neq0$ for a non-multiple minitwistor line, each $S^1$-orbit
contains a unique member for which $a$ is positive real.

For the ordinary family, this gives \eqref{ab3}, which is invariant
under $\iota$.
For the twisted family, this gives \eqref{ab4}, and
$\iota(\tilde C)=R^{n/2}(\tilde C)$.
Hence its image in $Q/\ZZ_n$ is invariant under the covering
transformation.
Thus the representatives in \eqref{ab3} and \eqref{ab4} are symmetric,
and their uniqueness follows from the uniqueness of the above choice
of the phase of $a$.
\proofend

%
%

\medskip
Thus, the orbit space of the $S^1$-action on the space of ordinary minitwistor lines in $Q/\ZZ_n$ is identified with 
 the domain
\begin{align}\label{dom1}
\ms D_{\rm ord}
:=
\big\{(a,b)\in\RR^2\mid a>0, \,a^2-b<0\big\},
\end{align}
and in the case $n$ is even, 
the orbit space of the $S^1$-action on the space of twisted minitwistor lines in $Q/\ZZ_n$ is identified with the domain
\begin{align}\label{dom2}
\ms D_{\rm tw}
:=
\big\{(a,b)\in\RR^2\mid a>0,\,a^2+b<0\big\}.
\end{align}
We will use these domains to prove the convergence of the minitwistor
lines in question.



Note that if $a=0$ in \eqref{ab3}, then the curve becomes a $\ZZ_n$-invariant minitwistor lines, and such lines are parameterized by $b\in\RR_{>0}$.
These are placed at a part of the boundary of the domain \eqref{dom1}.
The images of these minitwistor lines to the cone $C(\Lmd)$ are generating lines with multiplicity $n$, which are over the point $z=\zeta\eta<0$.
The inverse images under $\Pi_0$ of the generating lines over
$z=0$ and $z=\infty$ each split into two lines. The incidence
graph of these four lines is a square.

The images of symmetric minitwistor lines under the double covering $\Pi_0:Q/\ZZ_n\lras C(\Lmd)$ can be described as follows.

\begin{proposition}\label{p:br2}
If $C\subset Q/\ZZ_n$ is a symmetric minitwistor line which
is non-multiple and is not a boundary minitwistor line, then $\Pi_0$ maps $C$ two-to-one onto its image $\Pi_0(C)\subset C(\Lmd)$, and $\Pi_0(C)$ is a real hyperplane section of the cone $C(\Lmd)$ whose defining hyperplane is tangent to the branch curve
$\Sigma_0$ at exactly $(n-1)$ points.
Moreover, the defining hyperplane passes through neither the vertex of the cone nor the two singularities of the branch curve $\Sigma_0$.
\end{proposition}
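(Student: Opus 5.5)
The plan is to exhibit the hyperplane explicitly by pulling back to $Q$, and then to read off the tangency statement from the singularities of $C$, which are known from Proposition \ref{p:mtl2}.

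\textbf{Step 1: normal form and the invariant polynomial.} By Proposition \ref{p:sym}, take $C=\Phi(\tilde C)$ with $\tilde C$ given by \eqref{ab3}, or by \eqref{ab4} when $C$ is twisted. I form the product of the defining equations of the $n$ translates $R^k(\tilde C)$:
\[
F(\zeta,\eta):=\prod_{k=0}^{n-1}\big(\zeta\eta+a\omega^{-k}\zeta+a\omega^{k}\eta+b\big).
\]
This is a $\ZZ_n$-invariant section of $\ms O(n,n)$ whose divisor is $\Phi^*C=\sum_k R^k(\tilde C)$. Because $C$ is symmetric, the set of translates is preserved by $\iota$: in the ordinary case each factor is $\iota$-invariant up to reindexing $k\mapsto -k$, and in the twisted case $\iota(\tilde C)=R^{n/2}(\tilde C)$. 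Hence $F$ is $D_n$-invariant up to a scalar. I will check that this scalar is $1$, for instance from the coefficient of $(\zeta\eta)^n$. Then $F$ lies in the span of \eqref{q2}, so $F=P(z)+c\,v$ with $\deg P\le n$ and $c=\pm a^n\neq0$. Thus $F$ is the pullback of a hyperplane $H\subset\PP^{n+1}$, and $H$ is real because $C$ is real.

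\textbf{Step 2: the map $C\to\Pi_0(C)$ is two-to-one.} Since $\Phi^*\Pi_0^*(H|_{C(\Lmd)})=\operatorname{div}F=\Phi^*C$, we get $\Pi_0^*(H\cap C(\Lmd))=C$ as divisors on $Q/\ZZ_n$. As $C$ is reduced, $H\cap C(\Lmd)$ is reduced and $C=\Pi_0^{-1}(H\cap C(\Lmd))$. Because $\Pi_0$ has degree $2$, the restriction $C\to\Pi_0(C)=H\cap C(\Lmd)$ is two-to-one.

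\textbf{Step 3: $H$ avoids the vertex and the singularities of $\Sigma_0$.} The preimage of the vertex consists of the images of $(0,\infty)$ and $(\infty,0)$. The singular points of $\Sigma_0$ lie under the images of $(0,0)$ and $(\infty,\infty)$. By the proof of Proposition \ref{p:mtl1}, transported to the twisted case via $\Xi$, no translate of $\tilde C$ passes through any of these four fixed points. Therefore $H$ misses all of them, and in particular $H\cap C(\Lmd)$ is a smooth rational curve of degree $n$.

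\textbf{Step 4: counting the tangencies.} I restrict the double cover to the smooth curve $H\cap C(\Lmd)$, which is branched along $H\cap\Sigma_0$, a divisor of degree $2n$. Local analysis gives the following picture:
\begin{itemize}
\item over a transversal intersection point, $C$ is smooth;
\item over a point of simple tangency, $C$ has an ordinary node;
\item over a point of higher contact, $C$ has a worse $A_k$-singularity;
\item away from $\Sigma_0$, $C$ is smooth because $H\cap C(\Lmd)$ is smooth.
\end{itemize}
By Proposition \ref{p:mtl2}, $C$ has exactly $n-1$ ordinary nodes and no other singularities. Hence $H$ is tangent to $\Sigma_0$ at exactly $n-1$ points, each with contact order exactly $2$. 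This uses $2(n-1)$ of the $2n$ intersections, so the remaining two are transversal.

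\textbf{Main obstacle.} I expect the delicate part to be Step 1: verifying honest $\iota$-invariance of $F$, not just invariance up to sign, especially in the twisted case with $n$ even. I also need to confirm that the coefficient of $v$ is nonzero, so that $H$ is a genuine hyperplane section and not a union of generating lines. If the direct product expansion is awkward, I would instead expand $\prod_k(X+a\omega^{-k}\zeta+a\omega^k\eta)$ with $X=z+b$, using the identity for norms over the cyclotomic group. The top-order term in $\zeta$ alone is $a^n\zeta^n$ and the term in $\eta$ alone is $a^n\eta^n$ (up to sign in the twisted case), and these give $c\,v$ directly.
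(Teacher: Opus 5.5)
Your argument is correct in substance, but it reaches the hyperplane by a different route from the paper. The paper writes down no equation. It notes that $C$ has degree $(n,n)\cdot(1,1)=2n$ in $\PP^{n+2}$, and that symmetry makes $\Pi_0|_C$ of degree two, so $\Pi_0(C)$ is an irreducible curve of degree $n$. Since $z=\zeta\eta$ is non-constant on $\tilde C$, $\Pi_0(C)$ surjects onto $\Lmd$ and so spans at least a $\PP^n$. An irreducible degree-$n$ curve spans at most $\PP^n$, so $\Pi_0(C)$ spans a hyperplane $h$, and comparing degrees with the cone gives $\Pi_0(C)=h\cap C(\Lmd)$.

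You instead build the $\ZZ_n$-invariant section $F$ from the translates, show it is also $\iota$-invariant, and read off $F=P(z)+cv$ from \eqref{q2}. This gives you the divisor identity $\Pi_0^*(H\cap C(\Lmd))=C$ at once, so reducedness and the two-to-one property are immediate. It also gives an independent proof that $H$ misses the vertex, since $c=\prod_k a\omega^{-k}=(-1)^{n-1}a^n\neq 0$. The paper's argument buys uniformity over all symmetric lines and avoids computation. Your Steps 3 and 4 coincide with the paper's: the four fixed points lie over the vertex and over $p_0,p_\infty$, and the local model turns contact order $k$ into an $A_{k-1}$ point. Proposition \ref{p:mtl2} then forces exactly $n-1$ simple tangencies.

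One point needs repair: your opening reduction to normal form. Proposition \ref{p:sym} says each $S^1$-orbit contains a unique symmetric member of the form \eqref{ab3} (resp.\ \eqref{ab4}). It does not say that every symmetric line has that form. In fact $\iota(\tilde C)=R^l(\tilde C)$ only forces $\arg a\in\frac{\pi}{n}\ZZ$, and modulo $\ZZ_n$ this leaves two classes. Lines with $\arg a\equiv \pi/n \pmod{2\pi/n}$ are symmetric, non-multiple and non-boundary, yet are not of the normal form. For example, with $n=2$, the curve $\zeta\eta+ir\zeta-ir\eta+b=0$ with $b>r^2$ is such a line.

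The fix costs nothing. Step 1 never uses $a\in\RR$: symmetry alone says $\iota$ permutes the translates, and the $\zeta^n$-coefficient $(-1)^{n-1}a^n$ of $F$ is nonzero for every $a\in\CC^*$. Alternatively, apply the real involution $\rho$, which commutes with the covering transformation and fixes the vertex, $p_0$ and $p_\infty$. Note also that in the twisted case the factors of $F$ are $\zeta\eta+a\omega^{-k}\zeta-a\omega^{k}\eta+b$, not the ones you displayed.
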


\proof
Choose a lift $\tilde C\subset Q$ of $C$.
Since $C$ is not multiple, $\tilde C$ has trivial stabilizer under
$\ZZ_n$, and hence $\Phi|_{\tilde C}:\tilde C\lras C$ is birational.
Since the projective embedding $Q/\ZZ_n\lras\PP^{n+2}$ in
\eqref{d1} is induced from a subsystem of $|\ms O(n,n)|$ on $Q$,
we have
\[
\deg C=(n,n)\cdot(1,1)=2n.
\]
On the other hand, since $C$ is symmetric, the restriction
$\Pi_0|_C:C\lras\Pi_0(C)$ has degree two.
As $\Pi_0$ is induced by the linear projection in \eqref{d1}, it follows
that $\Pi_0(C)$ has degree $n$ in $\PP^{n+1}$.

Since $C$ is not multiple, the function $z=\zeta\eta$ is non-constant
on $\tilde C$.
Hence the projection of $\Pi_0(C)$ from the vertex of the cone onto
$\Lmd$ is surjective.
Thus the linear span of $\Pi_0(C)$ has dimension at least $n$.
Since an irreducible curve of degree $n$ spans a projective space of
dimension at most $n$, its span is a hyperplane
$h\subset\PP^{n+1}$.
As the cone $C(\Lmd)$ also has degree $n$, we obtain
$
\Pi_0(C)=h\cap C(\Lmd).
$
Since $C$ and $\Pi_0$ are real, the hyperplane $h$ is real.

Suppose that $C$ is ordinary.
Then Proposition \ref{p:mtl1} shows that $C$ has exactly $(n-1)$
ordinary nodes as its only singularities and avoids the four singular
points of $Q/\ZZ_n$.
If $C$ is twisted, then $n$ is even, and the automorphism $\Xi$
in the proof of Proposition \ref{p:tw0} maps an ordinary minitwistor line onto
$C$.
Since the induced automorphism of $Q/\ZZ_n$ preserves the four
singular points, the same conclusions hold for $C$.
The inverse image under $\Pi_0$ of the vertex of the cone consists of
two of these four singular points, while the inverse images of the two
singular points of $\Sigma_0$ are the other two.
Since $C$ is invariant under the covering transformation, we have
$
C=\Pi_0^{-1}\bigl(\Pi_0(C)\bigr).
$
It follows that $h$ passes through neither the vertex of the cone nor
the two singular points of $\Sigma_0$.

Hence all intersections of $h$ with $\Sigma_0$ take place at smooth
points of $\Sigma_0$.
Locally at such a point, a simple tangency of $h\cap C(\Lmd)$ with
$\Sigma_0$ gives an ordinary node of $C$, whereas a transverse
intersection gives a smooth point of $C$.
Since $C$ has exactly $(n-1)$ ordinary nodes and no other
singularities, $h$ is tangent to $\Sigma_0$ at exactly $(n-1)$ points.
\proofend

\medskip
The converse of this proposition holds as follows. This will be used at the final step in the proof of our main theorem.

\begin{proposition}\label{p:br3}
Let $h\subset\PP^{n+1}$ be a real hyperplane which passes
through neither the vertex of $C(\Lmd)$ nor the singularities of
$\Sigma_0$, and suppose that
\begin{align}\label{hSigma0}
h|_{\Sigma_0}=q+\ol q+2D,
\end{align}
where $q$ is a non-real point and $D$ is a reduced effective divisor of degree
$(n-1)$.
Then
$\Pi_0^{-1}\bigl(h\cap C(\Lmd)\bigr)$
is a symmetric minitwistor line in $Q/\ZZ_n$ which is either ordinary or twisted.
\end{proposition}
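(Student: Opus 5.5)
The plan is to pull everything back to $Q$ and classify the resulting $(1,1)$-curves. Put $H:=h\cap C(\Lmd)$ and $C:=\Pi_0^{-1}(H)$.

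Since $h$ misses the vertex, the projection of $H$ from the vertex onto $\Lmd$ is an isomorphism, so $H\simeq\PP^1$. Over a neighbourhood of a point of $H\cap\Sigma_0$, the double cover $C\to H$ is branched transversally at $q$ and $\ol q$ and acquires an ordinary node over each point of $D$. Hence $C$ is an irreducible rational curve with exactly $(n-1)$ ordinary nodes. Its normalization $\hat C$ is the double cover of $H$ branched at $q,\ol q$, so $\hat C\simeq\PP^1$. The curve $C$ is real, and it is invariant under the covering transformation $\iota$ by construction.

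The next step is to lift $C$ to $Q$. Recall from Proposition \ref{p:br2} that the four singular points of $Q/\ZZ_n$ lie over the vertex and over the two singular points of $\Sigma_0$. By hypothesis $h$ avoids all of these, so $C$ avoids the four singular points. Since $\Phi$ is étale away from the fixed points \eqref{sing1}, the pulled-back cover $\Phi^{-1}(C)\times_C\hat C\to\hat C\simeq\PP^1$ is an étale $n$-fold covering. It is therefore trivial. This gives $\Phi^*C=\sum_{i=0}^{n-1}R^i(\tilde C)$ with $\tilde C$ irreducible and $\Phi|_{\tilde C}$ birational. The translates $R^i(\tilde C)$ are all numerically equivalent and their classes sum to $(n,n)$ (the hyperplane class pulls back to $|\ms O(n,n)|$). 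Hence $\tilde C$ is a $(1,1)$-curve, and it avoids the four points \eqref{sing1}.

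Now I classify $\tilde C$. Write it as $\zeta\eta+\alpha\zeta+\beta\eta+b=0$. Avoiding $(\infty,\infty)$ lets us normalize the $\zeta\eta$-coefficient to $1$. Avoiding $(0,0)$ gives $b\neq0$, and avoiding $(\infty,0)$ and $(0,\infty)$ gives $\alpha,\beta\neq0$. The symmetry $\iota$ says that the curve obtained by exchanging $\alpha$ and $\beta$ is some $R^j(\tilde C)$. Reality says that $\sigma(\tilde C)$, which has coefficients $(\ol\beta,\ol\alpha,\ol b)$, is some $R^k(\tilde C)$. By \eqref{Rn}, $R^k$ multiplies $\alpha$ by $\omega^{-k}$ and $\beta$ by $\omega^{k}$, with $\omega=e^{2\pi i/n}$. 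Comparing coefficients gives $b\in\RR$ and $|\alpha|=|\beta|$. Moreover $\alpha\beta$ is unchanged by every $R^k$, so $\ol{\alpha\beta}=\alpha\beta$, that is, $\alpha\beta\in\RR$.

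Replacing $\tilde C$ by another translate $R^i(\tilde C)$ changes the ratio $\beta/\alpha$ by $\omega^{2i}$. I will use this to reduce to $\beta=\pm\alpha$, and then to $\beta=\ol\alpha$ or $\beta=-\ol\alpha$. The sign $-$ can only be reached, and only be needed, when $n$ is even, via $\omega^{n/2}=-1$. A rotation in the $S^1$-family \eqref{S1act} would then put the curve into the form \eqref{ab1} or \eqref{ab2}, apart from the inequality on $(a,b)$. This bookkeeping with roots of unity is routine, but it has to be done carefully with the parity of $n$.

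The main obstacle is the open condition, $|a|^2<b$ in \eqref{ab1} and $|a|^2+b<0$ in \eqref{ab2}. This is where the assumption that $q$ is non-real must enter. My plan is to identify the branch points $q,\ol q$ of $C\to H$ with the images of the fixed points on $\tilde C$ of the involution $\iota\circ R^{j}$ preserving $\tilde C$. I then check directly that, in the two normal forms, these fixed points are non-real exactly when the inequality holds, which also means $\tilde C$ has no real points in the ordinary case. For example, in the ordinary case the fixed points lie on $\zeta=\eta$ and solve $\zeta^2+2a\zeta+b=0$. This equation has non-real roots iff $a^2<b$, and $\zeta$ non-real is what should make the corresponding point of $\Sigma_0$ non-real. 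If the inequality failed, $q$ would be real, contradicting the hypothesis. I would verify this correspondence concretely, and separately for the twisted form.

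Once the inequality is established, Definition \ref{d:ord} and Proposition \ref{p:sym} show that $C$ is a symmetric ordinary or twisted minitwistor line.
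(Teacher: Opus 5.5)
Your lifting step is sound, and it takes a different route from the paper. The paper counts the $n(n-1)$ nodes of $X=\Phi^{-1}(C)$ against $p_a(X)=(n-1)^2$ to force exactly $n$ rational components. You instead pull the $\ZZ_n$-torsor back to $\hat C\simeq\PP^1$, where it is trivial. That pullback is the normalization of $\Phi^{-1}(C)$, because normalization commutes with \'etale base change, so the $n$ sheets give $n$ distinct components permuted transitively. This is cleaner and needs no node count.

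The gap is in the classification step. You propose to replace $\tilde C$ by translates so as to reduce $\beta/\alpha=\omega^{-j}$ to $\pm1$. But translates only multiply this ratio by $\omega^{2i}$. So when $n\equiv0\pmod 4$ and $j$ is odd, neither $1$ nor $-1=\omega^{n/2}$ is reachable, and tracking the parity of $n$ alone does not catch this. The case does occur. For $n=4$, the curve $\tilde C:\zeta\eta+e^{\pi i/4}\zeta+e^{-\pi i/4}\eta+2=0$ is of the form \eqref{ab1} and satisfies $\iota(\tilde C)=R(\tilde C)$. Its image therefore satisfies all the hypotheses (Proposition~\ref{p:br2}), yet no translate has $\beta=\pm\alpha$.

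A general element of the $S^1$-action does not rescue the normalization either. It does not commute with $\iota$ on $Q/\ZZ_n$ (only the powers of $\rho$ do), so it does not descend to $C(\Lmd)$. Consequently it does not carry $h$, $\Sigma_0$ and the hypothesis on $q$ along, and you cannot check the inequality "in the normal form" after such a rotation. Also, the relations you retain ($|\alpha|=|\beta|$ and $\alpha\beta\in\RR$) are too weak to exclude the twisted shape when $n$ is odd.

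The missing observation is the one the paper uses. Since $\sigma$ commutes with $R$, the exponent $k$ in $\sigma(\tilde C)=R^k(\tilde C)$ does not depend on the lift. Applying $\sigma$ twice gives $R^{2k}(\tilde C)=\tilde C$, so $2k\equiv0\pmod n$, because the stabilizer is trivial (which your $n$ distinct components give you). Your own relation $\ol\beta=\omega^{-k}\alpha$ then yields either $\beta=\ol\alpha$, which is the shape \eqref{ab1}, or, only for $n$ even, $\beta=-\ol\alpha$, which is the shape \eqref{ab2}. No normalization and no use of $\iota$ is needed for this.

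For the inequality, your fixed-point idea is the right one, but run it for general $j$:
\begin{itemize}
\item In the ordinary case, $\ol a=\omega^{-j}a$ forces $a=c\,e^{\pi ij/n}$ with $c\in\RR^*$.
\item The fixed points of $\iota\circ R^j$ on $\tilde C$ lie on $\eta=\omega^j\zeta$.
\item Setting $\xi=e^{\pi ij/n}\zeta$, they satisfy $\xi^2+2c\xi+b=0$, and their images on $\Sigma_0$ are $(z,v)=(\xi^2,\pm2\xi^n)$.
\item Real roots give real points. Since $c\neq0$, non-real roots are not purely imaginary, so $z\notin\RR$.
\item Hence $q$ is non-real if and only if $c^2<b$, i.e.\ $|a|^2<b$. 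Equality is excluded because it would make $\tilde C$ reducible.
\end{itemize}
The twisted case is analogous, giving $z=-\xi^2$ and the condition $|a|^2+b<0$. Alternatively, you may normalize using powers of the involution $\rho$, which descends to $(z,v)\mapsto(z,-v)$ and commutes with both $\iota$ and the real structure, so it does transport the hypotheses.
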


\proof
Put
$\Gamma:=h\cap C(\Lmd)$ and 
$C:=\Pi_0^{-1}(\Gamma)$. The latter is obviously symmetric.
Since $h$ does not pass through the vertex of the cone, $\Gamma$ is a smooth
rational curve.
From the assumption \eqref{hSigma0}, 
the curve $C$ has an ordinary node over each point of $D$, while its
normalization is a double cover of $\Gamma$ branched precisely at $q$ and
$\ol q$.
Hence $C$ is an irreducible real rational curve having exactly
$(n-1)$ ordinary nodes.

Put $X:=\Phi^{-1}(C)\subset Q.$
Since $h$ passes through neither the vertex of the cone nor the
singularities of $\Sigma_0$, the curve $C$ avoids the four singular
points of $Q/\ZZ_n$.
Hence the restriction
$
\Phi|_X:X\lras C
$
is an unramified covering of degree $n$.
In particular, $X$ is reduced and has exactly $n(n-1)$ ordinary nodes
and no other singularities.

On the other hand, $X$ is a divisor of bidegree $(n,n)$ on $Q$, and
therefore
$
p_a(X)=(n-1)^2.
$
Let $X_1,\ldots,X_r$ be the irreducible components of $X$, and let
$g_i$ be the genus of the normalization of $X_i$.
Since $X$ has $n(n-1)$ nodes, from the normalization exact sequence, we obtain the formula
\[
(n-1)^2
=
\sum_{i=1}^r g_i+n(n-1)-r+1.
\]
This gives
$
\sum_{i=1}^r g_i=r-n,
$
and hence $r\geq n$.
On the other hand, each $X_i$ is mapped surjectively onto $C$ by
$\Phi$, and the sum of the degrees of these maps is $n$.
Therefore $r\leq n$.
Consequently $r=n$,  $g_i=0$ for all $i$, and each restriction
$\Phi|_{X_i}:X_i\lras C$ has degree one.
The $\ZZ_n$-action permutes these $n$ components transitively.
Hence they all have the same bidegree.
Since their sum has bidegree $(n,n)$, each of them has bidegree
$(1,1)$.
In particular, every $X_i$ is a smooth rational curve.


Let $\tilde C$ be one of these $(1,1)$-components.
Since $C$ is real, we have
$\sigma(\tilde C)=R^k(\tilde C)$ for some $k$.
As the stabilizer of $\tilde C$ is trivial, applying $\sigma$
twice gives
$2k\equiv0\pmod n$.
Hence either $k=0$, or $n$ is even and $k=n/2$.
In the former case, comparison of the coefficients shows that
$\tilde C$ has an equation of the type appearing in
\eqref{ab1}, while in the latter case it has an equation of
the type appearing in \eqref{ab2}. Moreover, the symmetry of
$C$ gives $\iota(\tilde C)=R^l(\tilde C)$ for some $l$, and
the non-reality of $q$ and $\ol q$ then yields
$|a|^2-b<0$ in the former case and $|a|^2+b<0$ in the latter.
Since $\Phi^{-1}(C)$ has $n$ distinct components, $C$ is not
multiple. Thus $C$ is respectively an ordinary or a twisted
minitwistor line.
%
%
\proofend

\medskip
As in the setting of Propositions \ref{p:br2} and
\ref{p:br3}, let
$C=\Phi(\tilde C)\subset Q/\ZZ_n$ be a non-multiple
symmetric minitwistor line which is not a boundary
minitwistor line, and let $h\subset\PP^{n+1}$ be the
hyperplane such that $C=\Pi_0\inv(h)$.
Then, as in \eqref{hSigma0}, we have
\begin{align}\label{hSigma1}
h|_{\Sigma_0}=q+\ol q+2D,
\end{align}
where $q,\ol q$ are non-real points and $D$ is a real reduced effective
divisor of degree $g$.
We call $q,\ol q$, as points on $\Sigma_0$ or the corresponding points
on $Q/\ZZ_n$, the {\em non-tangential points}, since they are the only
points at which $h$ meets the branch curve $\Sigma_0$ non-tangentially.
We also call $D$ the {\em tangency divisor} of $h$.
Using the non-tangential points rather than the tangency points will be
the key to proving the convergence of the minitwistor lines.

From $z=\zeta\eta$ and $v=\zeta^n+\eta^n$ as in \eqref{zv},
the inverse image of the branch divisor $\Sigma_0=\{v^2=4z^n\}$ of the double covering $\Pi_0$ by the composition $\Pi_0\circ\Phi:Q\lras C(\Lmd)$ consists of the $n$ curves 
\begin{align}\label{br2}
\tilde\Sigma\upk:=\big\{(\zeta,\eta)\in Q\set \eta= \omega^k\zeta\big\},\quad 0\le k<n.
\end{align} 
All these are real.
The group $\ZZ_n$ acts on the set of these $n$ curves.
We have $R(\tilde\Sigma\upk) = \tilde\Sigma^{(k-2)}$, so
if $n$ is odd, then the action is free and transitive, and
$\Phi$ restricts to a homeomorphism from $\tilde \Sigma\upk$ to $\Sigma_0$.
This restriction is the normalization of $\Sigma_0$, which is irreducible.
If $n$ is even, the $\ZZ_n$-action has exactly two orbits, those
through $\tilde\Sigma^{(0)}$ and $\tilde\Sigma\upone$.
The isotropy subgroup of each $\tilde\Sigma\upk$ is
$\langle R^{n/2}\rangle$, and
$\Phi$ restricts to a double covering from $\tilde\Sigma\upk$
onto one of the two components of $\Sigma_0$.
It is branched at the two points whose images are the two singular
points of $\Sigma_0$.
The image component depends on the parity of $k$.

Next, we will identify the subset of the branch divisor $\Sigma_0$ formed by one of the non-tangential points as a symmetric minitwistor line $C\subset Q/\ZZ_n$ moves.
For this purpose, we first show the following

\begin{lemma}\label{l:q}
If a lift $\tilde C$ of a symmetric ordinary minitwistor line $C$ is defined 
by $\zeta\eta+a(\zeta+\eta)+b=0$ with $a,b\in\RR,a\neq 0$ and $a^2-b<0$
(see \eqref{ab3}), then the two points of $\tilde C$ which are mapped to the non-tangential points $q$ and $\ol q$ by $\Phi$ are 
\begin{align}\label{lift1}
(\zeta,\eta)=\big(
-a + i\sqrt{b-a^2},-a + i\sqrt{b-a^2}
\big)
\qandq
\big(
-a - i\sqrt{b-a^2},-a - i\sqrt{b-a^2}
\big).
\end{align}

\end{lemma}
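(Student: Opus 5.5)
The plan is to compute the preimage of the branch curve $\Sigma_0$ on the lift $\tilde C$ directly, and then decide which of these points are non-tangential by counting multiplicities. By \eqref{br2}, $(\Pi_0\circ\Phi)^{-1}(\Sigma_0)$ is the union of the $n$ curves $\tilde\Sigma\upk=\{\eta=\omega^k\zeta\}$. The points of $\tilde C$ lying over $\Sigma_0$ are therefore the intersections $\tilde C\cap\tilde\Sigma\upk$, $0\le k<n$. Each $\tilde\Sigma\upk$ is a $(1,1)$-curve, so each intersection consists of two points counted with multiplicity. Since $\Phi|_{\tilde C}$ is the normalization of $C$ and $C=\Pi_0^{-1}(h\cap C(\Lmd))$, the non-tangential points $q,\ol q$ correspond exactly to the points of $\tilde C$ where the composite $\Pi_0\circ\Phi|_{\tilde C}:\tilde C\to\Gamma$ ramifies. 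The tangency points, by contrast, correspond to pairs of distinct points of $\tilde C$ (the two branches of a node) lying over one point of $\Gamma$.

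\textbf{The $k=0$ intersection.} Substituting $\eta=\zeta$ into $\zeta\eta+a(\zeta+\eta)+b=0$ gives $\zeta^2+2a\zeta+b=0$. Its roots are $\zeta=-a\pm i\sqrt{b-a^2}$, which are the two points in \eqref{lift1}, and they are distinct because $a^2<b$. These two points are fixed by $\iota$, since $\iota(\zeta,\eta)=(\eta,\zeta)$ and $\iota$ preserves $\tilde C$ because the equation \eqref{ab3} is symmetric. A point of $\tilde C$ fixed by $\iota$ is a ramification point of $\tilde C\to\tilde C/\iota$. Here I use that $\tilde C/\iota$ maps isomorphically onto $\Gamma$: the covering transformation of $\Pi_0$ is induced by $\iota$, and $\iota$ preserves $\tilde C$ itself rather than merely its $\ZZ_n$-orbit. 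Hence these two points map to ramification points of $C\to\Gamma$ over $\Sigma_0$, that is, to non-tangential points. Since there are exactly two non-tangential points and the two points here are distinct and conjugate under $\sigma$, they must be the lifts of $q$ and $\ol q$.

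\textbf{The $k\neq0$ intersections, as a consistency check.} It remains to confirm that the other intersections give tangency points, so that no other point of $\tilde C$ also maps to $q$ or $\ol q$. For $k\ne0$, a point $(\zeta,\omega^k\zeta)\in\tilde C$ is sent by $\iota$ to $(\omega^k\zeta,\zeta)$. This equals $R^{m}$ applied to the original point for the appropriate $m$, so it is a distinct point of $\tilde C$ if $\omega^k\zeta\ne\zeta$, and both points have the same image in $\Gamma$. Since $\zeta\neq0$ on $\tilde C$ (as $b\ne0$), they are distinct for $k\neq0$, so these points form the two branches of a node over $\Sigma_0$, that is, tangency points. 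This accounts for $2(n-1)$ points on $\tilde C$, i.e.\ $n-1$ nodes, matching Proposition~\ref{p:br2}.

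The main obstacle is the bookkeeping in this last step: relating $\iota$ on $\tilde C$ to the induced involution on $Q/\ZZ_n$, and verifying the identification of the image under $\Phi$ (the relation $\iota(\tilde C)=\tilde C$ and that $R^m$ carries $\tilde C\cap\tilde\Sigma\upk$ correctly). I would verify this by directly checking that $(\omega^k\zeta,\zeta)=R^{m}(\zeta',\omega^k\zeta')$ for some point $(\zeta',\omega^k\zeta')$ of $\tilde C$, using \eqref{Rn}.
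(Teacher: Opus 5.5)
Your proof is correct, but it takes a different route from the paper's.

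\textbf{The paper's route.} The paper argues by elimination. From the equation it shows $\tilde C\cap\tilde C_i=\tilde C\cap\tilde\Sigma^{(n-i)}$ for $i\neq0$. By the node analysis of Proposition~\ref{p:mtl1}, every intersection with $\tilde\Sigma^{(i)}$, $i\neq0$, therefore maps to a node, i.e.\ to a tangency point. The only set left over is $\Phi(\tilde C\cap\tilde\Sigma^{(0)})$, so it must be $\{q,\ol q\}$.

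\textbf{Your route.} You identify $q,\ol q$ directly as branch points of the normalization. The map $\Pi_0\circ\Phi|_{\tilde C}$ has degree two and is $\iota$-invariant, and $\iota(\tilde C)=\tilde C$. Hence it induces a degree-one map $\tilde C/\iota\to\Gamma$ onto the smooth curve $\Gamma$, which is therefore an isomorphism. This degree count is the real justification, more than the remark about the covering transformation. The ramification points are then the fixed points of $\iota$ on $\tilde C$, namely $\tilde C\cap\{\zeta=\eta\}$. By the local model of a double cover, the normalization ramifies exactly over the odd-multiplicity points $q,\ol q$ of $h|_{\Sigma_0}=q+\ol q+2D$.

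\textbf{Comparison.} Your route is shorter and avoids the parity bookkeeping. It also makes your $k\neq0$ paragraph unnecessary: a ramified fibre of a degree-two map is a single point, so no other point of $\tilde C$ can lie over $q$ or $\ol q$. It transfers verbatim to Lemma~\ref{l:q2} if you use the involution $R^{n/2}\circ\iota:(\zeta,\eta)\mapsto(-\eta,-\zeta)$. That involution preserves the twisted lift, and its fixed curve is $\tilde\Sigma^{(n/2)}$. The paper's route, in exchange, records which intersections $\tilde C\cap\tilde C_i$ produce which nodes, including the real node when $n$ is even.

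\textbf{Your $k\neq0$ check.} This is essentially the paper's computation: $\iota(p)=R^k(p)$ for $p\in\tilde\Sigma^{(k)}$ gives $p\in\tilde C\cap\tilde C_{n-k}$. One slip there: ``$\zeta\neq0$ on $\tilde C$'' is false as written, since $(0,-b/a)\in\tilde C$. What you need is that $(0,0)$ and $(\infty,\infty)$ do not lie on $\tilde C$.
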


\proof
As in the proof of Proposition \ref{p:mtl1}, we put $\tilde C_i=R^i(\tilde C)$.
Using the equation \eqref{ab3} of $\tilde C$, we readily obtain that $\tilde C\cap \tilde C_{i}= \tilde C\cap \tilde\Sigma^{(n-i)}$ if $i\neq 0$.
As in the proof of Proposition \ref{p:mtl1}, each of these intersections consists of two points which are exchanged by $\sigma$, and all these points are mapped to nodes of $C$. All $(n-1)$ nodes of $C$ are obtained this way, and 
$\Phi(\tilde C\cap \tilde C_i)= \Phi(\tilde C\cap \tilde C_{n-i})$ for any $i$ because $\tilde C\cap\tilde C_i=R^n(\tilde C)\cap \tilde C_i=R^i(\tilde C\cap \tilde C_{n-i})$.
Hence, if $n$ is odd, then the set 
$$\bigcup_{1\le i\le\frac{n-1}2}(\tilde C\cap \tilde C_i)$$
is mapped bijectively onto the set of all nodes of $C$, while if $n$ is even, then the set 
\begin{align}\label{node1}
\bigcup_{1\le i<\frac{n}2}(\tilde C\cap \tilde C_i)
\end{align}
is mapped bijectively onto the set of all non-real nodes of $C$ and 
the intersection $\tilde C\cap\tilde C_{\frac n2}$ is mapped to the unique real node of $C$ as in the proof of Proposition \ref{p:mtl1}.

We have an obvious relation
\begin{align}\label{node2}
C\cap \Sigma_0 = \bigcup_{0\le i<n} \Phi\big(\tilde C\cap\tilde\Sigma\upi\big)
\end{align}
and $\Phi\big(\tilde C\cap\tilde\Sigma\upi\big)=\Phi\big(\tilde C\cap\tilde C_{i}\big)$ if $i\neq 0$.
Therefore, from the above description of the nodes of $C$, we have, if $n$ is odd, disposing contributions from $\tilde C\cap\tilde\Sigma\upi$ with $i>\frac{n-1}2$, 
$$
C\cap \Sigma_0 = \Phi(\tilde C\cap \tilde\Sigma^{(0)})
\cup\bigcup_{1\le i\le\frac{n-1}2}\Phi(\tilde C\cap \tilde C_i)
$$
and, if $n$ is even, disposing contributions from $\tilde C\cap\tilde\Sigma\upi$ with $i>\frac{n}2$, 
\begin{align}\label{node3}
C\cap \Sigma_0 
= \Phi(\tilde C\cap \tilde\Sigma^{(0)})
\cup\Phi(\tilde C\cap \tilde C_{\frac n2})
\cup\bigcup_{1\le i<\frac{n}2}\Phi(\tilde C\cap \tilde C_i).
\end{align}
These identities show that, in either case, the non-tangential intersection is exactly $\Phi(\tilde C\cap \tilde\Sigma^{(0)})$.
Calculating the intersection $\tilde C\cap \tilde\Sigma^{(0)}$ from their equations, we obtain \eqref{lift1}.
\proofend

\medskip
Similarly, for the twisted case, we have:

\begin{lemma}\label{l:q2}
Suppose $n$ is even.
If a lift $\tilde C$ of a symmetric twisted minitwistor line $C$ is defined by 
$\zeta\eta+a(\zeta-\eta)+b=0$
with $a,b\in\RR,a\neq 0$ and $a^2+b<0$ (see \eqref{ab4}),
then the two points of $\tilde C$ which are mapped to the non-tangential points $q$ and $\ol q$ by $\Phi$ are 
\begin{align}\label{lift2}
\big(
a + i\sqrt{-b-a^2},-a - i\sqrt{-b-a^2}
\big)
\qandq
(\zeta,\eta)=\big(
a - i\sqrt{-b-a^2},-a + i\sqrt{-b-a^2}
\big).
\end{align}
\end{lemma}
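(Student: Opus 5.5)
Lemma \ref{l:q2} will be proved by mirroring the proof of Lemma \ref{l:q}, with the twisted relation $\sigma(\tilde C)=R^{n/2}(\tilde C)$ keeping track of reality. The quickest route is the automorphism $\Xi(\zeta,\eta)=(\zeta,-\eta)$ from Proposition \ref{p:tw0}. It maps $\tilde C':\zeta\eta+a'(\zeta+\eta)+b'=0$ to $-\zeta\eta+a'(\zeta-\eta)+b'=0$, that is, to $\zeta\eta-a'(\zeta-\eta)-b'=0$. So with $(a,b)=(-a',-b')$ the curve \eqref{ab4} is $\Xi$ of a curve of the form \eqref{ab3}, except that $a'<0$ and $b'>a'^2$; Lemma \ref{l:q} only required $a\neq0$, so this is harmless. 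However, $\Xi$ does not commute with the covering transformation $\iota$ in the naive way, and it sends $\tilde\Sigma^{(k)}=\{\eta=\omega^k\zeta\}$ to $\{\eta=-\omega^k\zeta\}=\tilde\Sigma^{(k+n/2)}$. It therefore does not preserve the branch-curve lifts individually, and the non-tangential points are defined through the branch curve. For this reason I will redo the argument directly rather than transport it.

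Put $\tilde C_i=R^i(\tilde C)$. From the equation, $\tilde C_i$ is $\zeta\eta+a(\omega^{-i}\zeta-\omega^{i}\eta)+b=0$. Subtracting the equations of $\tilde C$ and $\tilde C_i$ gives
\[
a\bigl((1-\omega^{-i})\zeta-(1-\omega^{i})\eta\bigr)=0 .
\]
Since $(1-\omega^{i})/(1-\omega^{-i})=-\omega^{i}$, this says $\eta=-\omega^{-i}\zeta$, i.e.\ $\eta=\omega^{n/2-i}\zeta$. Hence $\tilde C\cap\tilde C_i=\tilde C\cap\tilde\Sigma^{(n/2-i)}$ for $i\neq0$. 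The nodes of $C$ are the images of $\tilde C\cap\tilde C_i$ for $1\le i\le n-1$, as in Proposition \ref{p:mtl2}, with $\Phi(\tilde C\cap\tilde C_i)=\Phi(\tilde C\cap\tilde C_{n-i})$. The decomposition \eqref{node2} of $C\cap\Sigma_0$ therefore shows that every $\tilde\Sigma^{(k)}$ with $k\neq n/2$ contributes only nodes, i.e.\ tangency points. The remaining index $k=n/2$ corresponds to $i=0$, so the non-tangential points are $\Phi(\tilde C\cap\tilde\Sigma^{(n/2)})$, where $\tilde\Sigma^{(n/2)}=\{\eta=-\zeta\}$.

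It remains to substitute $\eta=-\zeta$ into \eqref{ab4}. This gives $-\zeta^2+2a\zeta+b=0$, i.e.\ $\zeta^2-2a\zeta-b=0$, so
\[
\zeta=a\pm\sqrt{a^2+b}=a\pm i\sqrt{-b-a^2}
\]
and $\eta=-\zeta$, which is \eqref{lift2}.

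Finally, I will check that these two points are genuinely exchanged under $\Phi\circ\sigma$. We have $\sigma(\zeta,-\zeta)=(-\bar\zeta,\bar\zeta)$, and $R^{n/2}$ sends this to $(\bar\zeta,-\bar\zeta)$, which is the other point. I will also confirm that the two points are distinct modulo $\ZZ_n$, since $a^2+b<0$ makes them non-real. The main obstacle is the bookkeeping step of correctly identifying which $\tilde\Sigma^{(k)}$ carries the non-tangential intersection; the shift by $n/2$ relative to the ordinary case is exactly what must be verified carefully.
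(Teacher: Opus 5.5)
Your proposal is correct and is essentially the paper's own argument: you show $\tilde C\cap\tilde C_i=\tilde C\cap\tilde\Sigma^{(n/2-i)}$ for $i\neq0$, use the decomposition \eqref{node2} to conclude that only $\tilde\Sigma^{(n/2)}$ can carry the non-tangential points, and then solve the quadratic on $\eta=-\zeta$. Two minor asides: the $\Xi$-transport you set aside would also have worked, since on the quotient $\Xi$ commutes with the covering involution and induces $(z,v)\mapsto(-z,v)$ on the cone, which preserves $\Sigma_0$ and hyperplane sections; and your final distinctness check modulo $\ZZ_n$ needs $a\neq0$ in addition to $a^2+b<0$, because $R^{n/2}$ would identify the two points when $a=0$.
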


\proof
Since this can be proved similarly to the previous lemma, we only give an outline.
We keep the notations from the previous lemma.
Using the equation \eqref{ab4} of $\tilde C$, we obtain that 
$\tilde C\cap \tilde C_{i} = \tilde C\cap \tilde\Sigma^{(\frac n2-i)}$ modulo $n$ if $i\neq 0$.
Disposing the case $i<\frac n2$ this time for the same reason, the set 
\begin{align*}
\bigcup_{\frac{n}2<i<n}(\tilde C\cap \tilde C_i)
\end{align*}
is mapped bijectively to the set of non-real nodes of $C$, and
$\tilde C\cap\tilde C_{\frac n2}$ is mapped to the unique real node of $C$.
The equality \eqref{node2} still holds, and it means 
\begin{align}\label{node4}
C\cap \Sigma_0 
= \Phi(\tilde C\cap \tilde\Sigma^{(\frac n2)})
\cup\bigcup_{0 \le i<\frac{n}2}\Phi(\tilde C\cap \tilde C_{i+\frac n2 }).
\end{align}
The image $\Phi(\tilde C\cap \tilde C_{\frac n2 })$ is the unique real node of $C$
and $\cup_{0 < i<\frac{n}2}\Phi(\tilde C\cap \tilde C_{i+\frac n2 })$ is the set of 
non-real nodes of $C$.
Therefore, $\Phi(\tilde C\cap \tilde\Sigma^{(\frac n2)})$ is exactly the set of non-tangential intersection points.
Solving the quadratic equation, we obtain \eqref{lift2}.
\proofend

\medskip
In the following, we denote by $p_0$ the singular point of $\Sigma_0$ lying over $z=0$ and by $p_{\infty}$ the other singular point of $\Sigma_0$.
Both of these are real.
As we mentioned, the branch divisor $\Sigma_0=\{(v,z)\set v^2=4z^n\}\subset C(\Lmd)$ of the double covering
$\Pi_0:Q/\ZZ_n\lras C(\Lmd)$ is irreducible and a rational curve with cusps at $p_0$ and $p_{\infty}$ if $n$ is odd and consists of two smooth rational curves touching at $p_0$ and $p_{\infty}$ if $n$ is even.
In both cases, the irreducible components of $\Sigma_0$ are real under the induced real structure $(v,z)\longmapsto (\ol v,\ol z)$.
Under the limit $a_1,a_2,\dots,a_n\lras 0$ and $a_{n+1},a_{n+2},\dots,a_{2n}\lras \infty$ as in \eqref{lim1}, 
the curve $\Sigma_0$ is the limit of the hyperelliptic curve $\Sigma$. 
Of course, $\Sigma$ varies as the parameters $a_1,a_2,\dots,a_{2n}$ move.
We now describe the limits of the real and pure imaginary
circles introduced in Section \ref{s:degn}.

If $n$ is even, then in this limit, half of the pure imaginary
circles collapse to $p_0$, while the remaining half collapse
to $p_{\infty}$. Exactly two real circles, namely the inner
middle circle and the outer circle, survive. Each limiting
circle maps two-to-one onto one of the two closed arcs of
$\Lmd\us$ with endpoints $0$ and $\infty$. These two circles
meet at $p_0$ and $p_{\infty}$ and together divide each of the
two irreducible components of $\Sigma_0$ into two halves.
Consequently, their union divides $\Sigma_0$ into four
domains, each homeomorphic to a half-disk.
On the other hand, if $n$ is odd, then exactly one of the pure imaginary circles (the inner middle one) and exactly one of the real circles (the outer one) survive in the limit, and these limit circles meet at $p_0$ and $p_{\infty}$, the two cusps of $\Sigma_0$.
Again, these two circles divide the irreducible branch curve $\Sigma_0$ into four domains. 
Each of these domains is also identified with a half-disk.
So in either case, the limit $\Sigma_0$ is divided into four quarters by two circles.

\begin{figure}[htbp]
\centering
\includegraphics[width=\linewidth]{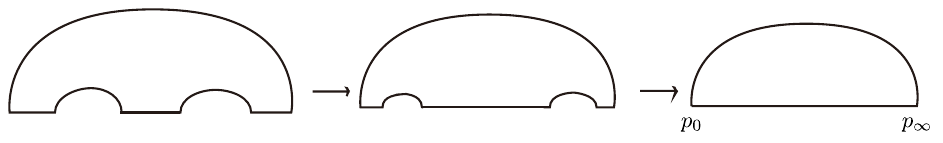}

\medskip
\includegraphics[width=\linewidth]{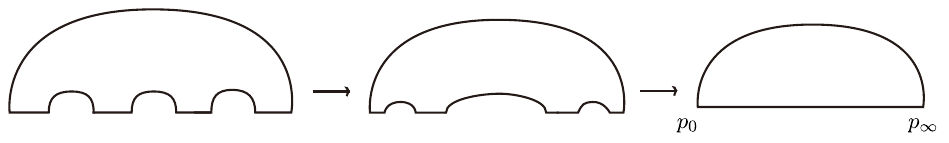}
\caption{Degeneration of the closed quarters of the branch
curves as $t\to0$ for $g=2$ and $g=3$.}
\label{f:quarter-degeneration}
\end{figure}

Let $\rho$ be the automorphism of $Q/\ZZ_n$ induced by
\[
(\zeta,\eta)\longmapsto
\bigl(e^{\frac{\pi i}n}\zeta,e^{-\frac{\pi i}n}\eta\bigr)
\]
on $Q$. Since the square of this automorphism generates the
$\ZZ_n$-action on $Q$, the induced automorphism $\rho$ is an
involution on $Q/\ZZ_n$. It also induces an involution on
$C(\Lmd)$, given in the coordinates $(z,v)$ by
\[
(z,v)\longmapsto(z,-v).
\]
In particular, it preserves $\Sigma_0$, and its restriction to
$\Sigma_0$ is the limiting hyperelliptic involution.

Now we can provide the main result of this subsection.
We set
$$\ol{\ms D}_{\rm ord}=\{(a,b)\in\RR^2\set a\ge 0, a^2\le b\}\qandq
\ol{\ms D}_{\rm tw}=\{(a,b)\in\RR^2\set a\ge 0, a^2+b\le 0\}.$$
When speaking of their natural closures below, we further adjoin one point at infinity to each of these domains.
These are the closures of the sets $\ms D_{\rm ord}$ and $\ms D_{\rm tw}$
defined in \eqref{dom1} and \eqref{dom2}, which are halves of the spaces of symmetric ordinary and twisted minitwistor lines in $Q/\ZZ_n$ respectively.
Points on a boundary component $\{(0,b)\set b\ge 0\}\subset \ol{\ms D}_{\rm ord}$ and
$\{(0,b)\set b\le 0\}\subset\ol{\ms D}_{\rm tw}$
respectively parameterize multiple minitwistor lines, and
another boundary component $\{(a,b)\set a^2-b=0\}\subset \ol{\ms D}_{\rm ord}$ and
$\{(a,b)\set a^2+b= 0\}\subset\ol{\ms D}_{\rm tw}$ respectively parameterize boundary minitwistor lines that are symmetric.

\begin{proposition}\label{p:Sigma0}
Associating with each symmetric ordinary minitwistor line
represented by \eqref{ab3} the former non-tangential point
in \eqref{lift1} induces a bijection from $\ms D_{\rm ord}$ onto one of the four quarters of \(\Sigma_0\).
This correspondence extends continuously to their natural
closures.

If \(n\) is even, associating with each symmetric twisted
minitwistor line represented by \eqref{ab4} the former
non-tangential point in \eqref{lift2} likewise induces a
bijection from $\ms D_{\rm tw}$ onto one of the
four quarters of \(\Sigma_0\). This correspondence again
extends continuously to their natural closures.
\end{proposition}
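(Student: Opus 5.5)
Since the lift $\tilde\Sigma^{(0)}=\{\eta=\zeta\}$ maps onto $\Sigma_0$ via $\zeta\mapsto (z,v)=(\zeta^2,2\zeta^n)$, I will parametrize everything by the single complex number $\zeta=-a+i\sqrt{b-a^2}$. For the ordinary case, the map $(a,b)\mapsto \zeta=-a+i\sqrt{b-a^2}$ is a bijection from $\ms D_{\rm ord}$ onto the open second quadrant $\{\mathrm{Re}\,\zeta<0,\ \mathrm{Im}\,\zeta>0\}$, with inverse $a=-\mathrm{Re}\,\zeta$, $b=|\zeta|^2$. It extends continuously to $\ol{\ms D}_{\rm ord}$ plus the point at infinity, sending it onto the closed quadrant plus $\infty$. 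The boundary ray $a=0$ goes to the positive imaginary axis, the parabola $a^2=b$ goes to the negative real axis, and infinity goes to $\zeta=\infty$. The twisted case is identical with $\zeta=a+i\sqrt{-b-a^2}$ on $\tilde\Sigma^{(n/2)}=\{\eta=-\zeta\}$ (using $\omega^{n/2}=-1$). There $(a,b)\mapsto\zeta$ is a bijection onto the open first quadrant, with $a=\mathrm{Re}\,\zeta$, $b=-|\zeta|^2$. Its image under $\Phi$ lands in $\Sigma_0$ via $(z,v)=(-\zeta^2,2\zeta^n)$.

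It remains to show that $\Phi$ restricted to a quadrant of $\tilde\Sigma^{(0)}$ (resp.\ $\tilde\Sigma^{(n/2)}$) is injective and that its image is exactly one quarter of $\Sigma_0$. The quarters are cut out by the two surviving limit circles, which are the loci over $\Lmd\us$, i.e.\ $z\in\RR\cup\{\infty\}$. On $\tilde\Sigma^{(0)}$ this locus is $\zeta^2\in\RR$, i.e.\ $\zeta$ on the real or imaginary axis. Hence the four open quadrants of the $\zeta$-line are exactly the preimages of the complement of the two circles in the relevant component of $\Sigma_0$.

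For injectivity, note that $\Phi(\zeta,\zeta)=\Phi(\zeta',\zeta')$ iff $(\zeta',\zeta')=R^k(\zeta,\zeta)=(\omega^k\zeta,\omega^{-k}\zeta)$, which forces $\omega^{2k}=1$.
\begin{itemize}
\item When $n$ is odd, $k=0$, so $\tilde\Sigma^{(0)}\to\Sigma_0$ is a bijection (the normalization, as stated before Lemma \ref{l:q}). The four quadrants then map bijectively onto the four quarters of $\Sigma_0$.
\item When $n$ is even, the only extra identification is $\zeta\mapsto-\zeta$ (via $R^{n/2}$), which swaps opposite quadrants. Hence $\tilde\Sigma^{(0)}$ double covers one component, each quadrant maps bijectively onto one of that component's two halves, and an open quadrant meets no other quadrant's image.
\end{itemize}
Thus each open quadrant maps bijectively onto an open quarter. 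Continuity of $\Phi$ and of $(a,b)\mapsto\zeta$ gives the continuous extension to the closures, with the corner $\zeta=0$ going to $p_0$ and $\zeta=\infty$ to $p_\infty$.

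The step I expect to need the most care is matching these quadrant images with the quarters as defined via the degenerating circles. Concretely, one must check that the two surviving limit circles described before the proposition are exactly the images of the real and imaginary axes of $\tilde\Sigma^{(0)}$ and $\tilde\Sigma^{(1)}$ (resp.\ $\tilde\Sigma^{(n/2)}$). I would verify this by computing $v=2\zeta^n$ on these axes: it is real on the real axis, and real or purely imaginary on the imaginary axis according to the parity of $n$. This matches the assertion that for $n$ odd a real circle and a pure imaginary circle survive, while for $n$ even two real circles survive. I would also confirm that the ordinary and twisted quarters differ when $n$ is even: since $z=\zeta^2$ on $\tilde\Sigma^{(0)}$ but $z=-\zeta^2$ on $\tilde\Sigma^{(n/2)}$, they lie over opposite arcs of $\Lmd\us$ or on different components.
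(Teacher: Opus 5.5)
Your main argument is correct and is essentially the paper's proof. It uses the same parameter $\xi=-a+i\sqrt{b-a^2}$ (resp.\ $\xi=a+i\sqrt{-b-a^2}$), the same image formula $(z,v)=(\pm\xi^2,2\xi^n)$, and the same reading of the boundary rays and of $\xi=0,\infty$. The paper gets injectivity more directly. The map $\xi\mapsto\pm\xi^2$ already sends the relevant open quadrant bijectively onto the lower half-plane, over which $\Sigma_0$ splits into two sheets, so your $\ZZ_n$-orbit analysis of $\Phi|_{\tilde\Sigma^{(0)}}$ is harmless but unnecessary. Also, for $n$ even, opposite quadrants have the \emph{same} image. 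What you actually need is only that a quadrant is disjoint from its image under $\zeta\mapsto-\zeta$.

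You should drop your closing ``confirmation'', because it is false and not needed for the statement. Both quarters lie over the lower half-plane $\{\operatorname{Im} z<0\}$, not over arcs of $\Lmd\us$. If $\xi$ is the ordinary parameter over a given $z$, then the twisted parameter over the same $z$ is $-i\xi$, so $v_{\rm tw}=(-i)^n v_{\rm ord}=(-1)^{n/2}v_{\rm ord}$. Hence for $4\mid n$ the ordinary and twisted quarters coincide, and for $n\equiv 2\pmod 4$ they are exchanged by $\rho$. This is exactly what lets the paper later compare both families on a single quarter $\Sigma_0''$.

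Similarly, on $\tilde\Sigma^{(1)}$ one has $z=\omega\zeta^2$. Its real locus is therefore $e^{-\pi i/n}(\RR\cup i\RR)$, not the coordinate axes. Your argument never uses $\tilde\Sigma^{(1)}$, so this does not affect the proof.
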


The analogous parametrizations of the remaining quarters are
obtained by applying the real structure and the involution
\(\rho\).

\proof
For the ordinary family, put
$
\xi=-a+i\sqrt{b-a^2}.
$
As \((a,b)\) varies under the conditions \(a>0\) and
\(b-a^2>0\), the point \(\xi\) varies exactly once over the
second quadrant of \(\CC\), with inverse
$
a=-\operatorname{Re}\xi,\qquad b=|\xi|^2.
$
By Lemma \ref{l:q}, the former non-tangential point is the
image of \((\xi,\xi)\). Its coordinates on \(\Sigma_0\) are
$
(z,v)=(\xi^2,2\xi^n).
$
Since \(\xi\mapsto\xi^2\) maps the second quadrant
bijectively onto the lower half-plane, the point
\((\xi^2,2\xi^n)\) sweeps out one quarter of \(\Sigma_0\)
exactly once.

If \(n\) is even, the same argument applies to the twisted
family. Namely, put
$
\xi=a+i\sqrt{-b-a^2}.
$
As \((a,b)\) varies under the conditions \(a>0\) and
\(a^2+b<0\), the point \(\xi\) varies exactly once over the
first quadrant. By Lemma \ref{l:q2}, the former
non-tangential point is the image of \((\xi,-\xi)\), whose
coordinates on \(\Sigma_0\) are
$
(z,v)=(-\xi^2,2\xi^n).
$
Since \(\xi\mapsto-\xi^2\) maps the first quadrant
bijectively onto the lower half-plane, this point likewise
sweeps out one quarter of \(\Sigma_0\) exactly once.

These formulas also give continuous extensions to the
natural closures. In the ordinary case, the boundary
components \(a=0\) and \(b=a^2\) correspond respectively to
the positive imaginary and negative real axes in the
\(\xi\)-plane. If \(n\) is odd, their images are arcs of the
limiting pure imaginary and real circles, respectively. If
\(n\) is even, their images are the two complementary arcs
of the real circle on the relevant component of \(\Sigma_0\).

In the twisted case, the boundary components \(a=0\) and
\(a^2+b=0\) correspond respectively to the positive imaginary
and positive real axes in the \(\xi\)-plane. Their images are
the two complementary arcs of the real circle on the
relevant component of \(\Sigma_0\). The points \(\xi=0\) and
\(\xi=\infty\) correspond to the two singular points of
\(\Sigma_0\).
\proofend

\section{Convergence of minitwistor lines}
In this section, we prove that the family of minitwistor lines in $\ms T$ converges to 
that of minitwistor lines in the limiting space $Q/\ZZ_n$, using the results in the previous section.

\subsection{The relative Picard space and the squaring map}\label{ss:relP}
Take any minitwistor space $\ms T$ as in \eqref{tw2} and
let $\Delta\subset\CC$ be an open disk containing $0$ and $1$.
To fix a degeneration of $\ms T$ to $Q/\ZZ_n$ parameterized by $\DDD$,
for each index $i$ with $1\le i\le 2n$,
we take any holomorphic path
$$
\big\{a_i(t)\in\PP^1\set t\in\DDD\big\},\quad\text{ with $a_i(1)=a_i$},
$$
such that $a_1(t)<\cdots<a_{2n}(t)$
for $t\in I\setminus\{0\}$ where $I:=\DDD\cap\RR$
and
$$
a_i(0)=
\begin{cases}
0 & \text{if } 1\le i\le n,\\
\infty & \text{if } n<i\le 2n.
\end{cases}
$$
For $t\in\DDD\setminus\{0\}$, let
$$
\Sigma_t:=\big\{v^2=(z-a_1(t))(z-a_2(t))\dots(z-a_{2n}(t))\big\}
\subset C(\Lmd).
$$
At $t=0$, let $\Sigma_0$ denote the rescaled limit described in Section~\ref{s:degn}. Let
$\bm\Sigma\lras\DDD$ be the resulting flat family of compact
curves, whose fiber over $t\in\DDD$ is $\Sigma_t$.
Let $\ms T_t$ be the double covering of the cone $C(\Lmd)$ branched along $\Sigma_t$.
Then $\ms T_0=Q/\ZZ_n$ and $\ms T_1=\ms T$.
From Section \ref{s:degn}, the family $\{\ms T_t\set t\in\DDD\}$ is a degeneration of $\ms T$ to $Q/\ZZ_n$.

For each $k\in\ZZ$, let
\begin{equation}\label{rPic}
\Pic^k(\bm\Sigma/\DDD)\lras\DDD
\end{equation}
be the relative Picard space parametrizing line bundles of
total degree $k$ on the fibers of $\bm\Sigma\lras\DDD$.
It is represented by an algebraic space
\cite[Tag 0D2C]{Stacks}. We will mainly use the cases
$k=g$ and $k=2g$, where $g=n-1$ as before. The fiber of
\eqref{rPic} over $t\in\DDD$ is naturally identified with
$\Pic^k(\Sigma_t)$.

If $n$ is even, the central fiber $\Sigma_0$ is reducible,
and the total degree does not determine a component of its
Picard space. For each pair $\underline d=(d_1,d_2)$ of
integers, we write $\Pic^{\underline d}(\Sigma_0)$ for the
locus of line bundles of multidegree $\underline d$. Thus
$$
\Pic^k(\Sigma_0)
=
\coprod_{d_1+d_2=k}
\Pic^{(d_1,d_2)}(\Sigma_0).
$$
Since $\bm\Sigma\lras\DDD$ is a proper flat family of reduced
and connected curves, the projection \eqref{rPic} is smooth
\cite[Tag 0DPK]{Stacks}. Hence, since $\DDD$ is non-singular, the
total space $\Pic^k(\bm\Sigma/\DDD)$ is also non-singular.

We denote by
\begin{equation}\label{mfT}
\mf T:\Pic^g(\bm\Sigma/\DDD)
\lras\Pic^{2g}(\bm\Sigma/\DDD)
\end{equation}
the relative squaring map, given by $L\mapsto L^{\otimes2}$.
For each $t\in\DDD$, under the natural identifications of
the fibers with $\Pic^g(\Sigma_t)$ and
$\Pic^{2g}(\Sigma_t)$, respectively, the restriction of
$\mf T$ to the fiber over $t$ is identified with the
squaring map
\[
\mf t_t:\Pic^g(\Sigma_t)\lras\Pic^{2g}(\Sigma_t).
\]

Let $J(\Sigma_0)$ denote the generalized Jacobian of
$\Sigma_0$, namely the identity component of its Picard
space.

\begin{proposition}\label{p:squaring}
The generalized Jacobian $J(\Sigma_0)$, namely the identity
component of the Picard space of $\Sigma_0$, is given by
\[
J(\Sigma_0)\simeq
\begin{cases}
\CC^g, & n\ \text{odd},\\
\CC^*\times\CC^{g-1}, & n\ \text{even}.
\end{cases}
\]Consequently, if $n$ is odd, the squaring map
\[
\mf t_0:\Pic^g(\Sigma_0)\lra\Pic^{2g}(\Sigma_0)
\]
is an isomorphism. If $n$ is even, then, on every fixed
multidegree component, the squaring map
\[
\mf t_0:\Pic^{\underline d}(\Sigma_0)
\lra\Pic^{2\underline d}(\Sigma_0)
\]
is a two-to-one unramified covering.

For $t\ne0$, the map
\[
\mf t_t:\Pic^g(\Sigma_t)\lra\Pic^{2g}(\Sigma_t)
\]
is an unramified covering of degree $2^{2g}$. Moreover, the
relative squaring map $\mf T$ is a local biholomorphism.
\end{proposition}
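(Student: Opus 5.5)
The plan is to compute $J(\Sigma_0)$ from the normalization sequence and then reduce every statement about the squaring map to squaring on the identity component. Let $\nu:\tilde\Sigma_0\to\Sigma_0$ be the normalization. By the description of $\Sigma_0$ before Proposition~\ref{p:Sigma0}, $\tilde\Sigma_0$ is $\PP^1$ if $n$ is odd and $\PP^1\sqcup\PP^1$ if $n$ is even. The exact sequence $0\to\ms O^*_{\Sigma_0}\to\nu_*\ms O^*_{\tilde\Sigma_0}\to\bigoplus_{p\in\{p_0,p_\infty\}}\tilde{\ms O}_p^*/\ms O_p^*\to0$ gives
\[
0\to H^0(\tilde\Sigma_0,\ms O^*)/H^0(\Sigma_0,\ms O^*)\to\bigoplus_p\tilde{\ms O}_p^*/\ms O_p^*\to J(\Sigma_0)\to J(\tilde\Sigma_0)=0 .
\]

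The local groups come from Proposition~\ref{p:br1}: $p_0$ and $p_\infty$ are $A_{n-1}$-singularities $v^2=z^n$.
\begin{itemize}
\item If $n$ is odd, the singularity is unibranch with $\delta=(n-1)/2$. Here $\tilde{\ms O}^*/\ms O^*$ is a unipotent group $\CC^{\delta}$; with parameter $s$, $z=s^2$, $v=s^n$, one uses $1+\sum c_js^{2j-1}$ for $j\le\delta$. The global constants contribute nothing, so $J(\Sigma_0)\simeq\CC^{2\delta}=\CC^{g}$.
\item If $n$ is even, each singularity has two smooth branches $v=\pm z^{n/2}$ with $\delta=n/2$. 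In this case $\tilde{\ms O}^*/\ms O^*\simeq\CC^*\times\CC^{n/2-1}$: the $\CC^*$ is the ratio of the values on the two branches, and the unipotent part measures the failure of the two jets to agree to order $n/2$.
\end{itemize}
The global units of $\tilde\Sigma_0$ form $(\CC^*)^2$ and those of $\Sigma_0$ form the diagonal $\CC^*$. This quotient $\CC^*$ embeds diagonally into the two multiplicative factors, so $J(\Sigma_0)\simeq\CC^*\times\CC^{2(n/2-1)}=\CC^*\times\CC^{g-1}$. Checking these local computations carefully, in particular the splitting of the $A_{n-1}$ unit group into torus and unipotent parts, is the main technical point.

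Squaring on $\CC^g$, which is a vector group in characteristic zero, is the isomorphism $x\mapsto2x$. On $\CC^*\times\CC^{g-1}$ it is $(\lambda,x)\mapsto(\lambda^2,2x)$, a two-to-one unramified covering. Each component $\Pic^{\underline d}(\Sigma_0)$ is a $J(\Sigma_0)$-torsor; for odd $n$ the curve is irreducible, so $\Pic^g(\Sigma_0)$ is a single torsor. Fixing $L_0$ and writing $L=L_0\otimes M$, we get $L^{2}=L_0^{2}\otimes M^{2}$. This identifies $\mf t_0$ on that component with squaring on $J(\Sigma_0)$ after translation, which gives the stated properties. For $t\ne0$, $\Sigma_t$ is smooth of genus $g$, and the same translation argument reduces $\mf t_t$ to multiplication by $2$ on the complex torus $J(\Sigma_t)$. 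That map is an unramified covering with kernel $J[2]\simeq(\ZZ/2)^{2g}$, so it has degree $2^{2g}$.

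For the relative statement, $\Pic^g(\bm\Sigma/\DDD)$ and $\Pic^{2g}(\bm\Sigma/\DDD)$ are nonsingular of dimension $g+1$ and smooth over $\DDD$ by the cited smoothness, and $\mf T$ commutes with the projections to $\DDD$. At any point, the differential of $\mf T$ induces the identity on the base direction. On the vertical tangent spaces $H^1(\Sigma_t,\ms O)$, it is the differential of $\mf t_t$, namely multiplication by $2$, since the differential of $L\mapsto L^2$ is doubling in the Lie algebra. This holds for every $t$, including $t=0$. Hence $d\mf T$ is an isomorphism everywhere, and the inverse function theorem shows that $\mf T$ is a local biholomorphism.
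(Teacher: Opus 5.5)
Your proposal is correct and follows essentially the same route as the paper. Both arguments use the normalization sequence with the local groups $\tilde{\ms O}_p^*/\ms O_p^*$: a vector group of dimension $(n-1)/2$ at each cusp for $n$ odd, and $\CC^*\times\CC^{n/2-1}$ at each singular point for $n$ even, where the global units remove one $\CC^*$ (your ratio-of-branches and jet-agreement description is the paper's $(\CC[[\lambda]]/(\lambda^m))^*$). Both then reduce squaring to multiplication by $2$ on the identity component, count $2$-torsion for $t\neq0$, and apply the inverse function theorem to $\mf T$.
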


\proof
Suppose first that $n$ is odd. Then $\Sigma_0$ has two cusps, $p_0$ and $p_{\infty}$, whose local rings are isomorphic to
$
\CC[[\lambda^2,\lambda^n]]\subset\CC[[\lambda]].
$
By the normalization exact sequence for the Picard group
\cite[Chapter II, Exercise 6.9]{Hartshorne}
(see also \cite{Ros54} for generalized Jacobians), and since
the normalization of $\Sigma_0$ is rational, we have
\[
\Pic^0(\Sigma_0)
\simeq
\bigoplus_{p\in\Sing(\Sigma_0)}
\widetilde{\ms O}_p^*/\ms O_p^*.
\]
For each cusp, the quotient on the right is a vector group of dimension $(n-1)/2$, corresponding to the gaps $1,3,\ldots,n-2$ of the semigroup $\langle2,n\rangle$. Hence
\[
J(\Sigma_0)\simeq\CC^g.
\]

Next, suppose that $n$ is even, and write $n=2m$. Then $\Sigma_0$ consists of two smooth rational curves meeting at two $A_{2m-1}$-singularities. At either singular point $p$, we have
\[
\ms O_p\simeq
\left\{
(f_1,f_2)\in\CC[[\lambda]]\oplus\CC[[\lambda]]
\mathrel{}\middle|\mathrel{}
f_1\equiv f_2\pmod{\lambda^m}
\right\},
\]
and therefore
\[
Q_p:=\widetilde{\ms O}_p^*/\ms O_p^*
\simeq
\bigl(\CC[[\lambda]]/(\lambda^m)\bigr)^*
\simeq\CC^*\times\CC^{m-1}.
\]
Since the normalization is the disjoint union of two rational curves, the normalization exact sequence gives
\[
1\longrightarrow\CC^*
\longrightarrow(\CC^*)^2
\longrightarrow Q_{p_0}\times Q_{p_{\infty}}
\longrightarrow J(\Sigma_0)
\longrightarrow1.
\]
The quotient $(\CC^*)^2/\CC^*$ represents the relative rescaling of the two components and eliminates one of the two $\CC^*$-factors. Thus
\[
J(\Sigma_0)=\Pic^{(0,0)}(\Sigma_0)
\simeq\CC^*\times\CC^{2m-2}
=\CC^*\times\CC^{g-1}.
\]

Choose a line bundle $L$ in a fixed degree or multidegree component and use $L$ and $L^{\otimes2}$ as origins of the source and target Picard components. Under these identifications, the squaring map is multiplication by $2$ on $J(\Sigma_0)$. Hence it is an isomorphism when $n$ is odd. When $n$ is even, it is given by
\[
(z,w)\longmapsto(z^2,2w)
\]
on $\CC^*\times\CC^{g-1}$, and is therefore a two-to-one unramified covering.

For $t\ne0$, the variety $\Pic^0(\Sigma_t)$ is a complex torus of dimension $g$. After choosing origins, $\mf t_t$ is multiplication by $2$, whose kernel is
\[
\Pic^0(\Sigma_t)[2]\simeq(\ZZ/2\ZZ)^{2g}.
\]
Thus $\mf t_t$ is an unramified covering of degree $2^{2g}$.

Finally, $\mf T$ lies over the identity of $\DDD$. Its differential induces the identity on the tangent space of the base and multiplication by $2$ on each relative tangent space. It is therefore an isomorphism at every point. Since the relative Picard spaces are non-singular, the inverse function theorem shows that $\mf T$ is a local biholomorphism.
\proofend

\medskip
In the next subsection, we use Proposition \ref{p:squaring}
to study, in families, the square roots of the line-bundle
classes parametrized by the quarters of the branch curves.  


\subsection{The Seifert surfaces and their convergence}\label{ss:Seifert}
By Proposition \ref{p:Sigma0}, $\ms D_{\rm ord}$
(see \eqref{dom1}) parametrizes the symmetric ordinary
minitwistor lines. Assigning to each such line one of its two
non-tangential points, as specified in the proposition,
identifies $\ms D_{\rm ord}$ with the interior of one of the
four quarters of $\Sigma_0$. When $n$ is even, the same
statements hold for the symmetric twisted minitwistor lines
and $\ms D_{\rm tw}$ (see \eqref{dom2}). We fix one of the four closed quarters and denote it by
$\Sigma_0''$; there is no canonical choice.

The equations defining $\bm\Sigma\lras\DDD$ are invariant
under $(v,z,t)\mapsto(\ol v,\ol z,\ol t)$ and
$(v,z,t)\mapsto(-v,z,t)$. Hence, for $t\in I:=\DDD\cap\RR$, their
restrictions to $\Sigma_t$ give the real structure and the
hyperelliptic involution, respectively; in
particular, these formulas remain valid on the central fiber.
The preceding description of the limiting real and pure
imaginary circles shows that the four closed quarters of
$\Sigma_0$ are the limits of the four closed fundamental
domains in $\Sigma_t$. Accordingly, for
$t\in I\setminus\{0\}$, we choose the corresponding closed
quarters $\Sigma_t''$ so that they converge to $\Sigma_0''$
as $t\to0$.
The singularities $p_0$ and $p_{\infty}$ of $\Sigma_0$ are exactly the two corners of 
$\Sigma''_0$.

In \cite{H26}, we introduced a Seifert surface in the real
locus of the Jacobian of the hyperelliptic branch curve in
order to identify the global structure of the associated
EW space arising from a gravitational instanton
of type $A_{2n-1}$. This surface may be equivalently defined
in a suitable Picard space. We use this Picard-theoretic
formulation, since it extends naturally to the limiting branch
curve $\Sigma_0$, even though $\Sigma_0$ is singular and can be reducible.

We retain the notation of the previous subsection. Let
$\pi_t:\Sigma_t\lras\Lmd$ denote the double covering. For
$t\neq0$, let $r_{i,t}$ be the ramification point of $\pi_t$
lying over the branch point $a_i(t)$. We extend this notation by setting
$r_{1,0}:=p_0$.

Using $I=\DDD\cap\RR$, write
\[
\bm\Sigma_I:=\bm\Sigma|_I,\qquad
\bm\Sigma_I'':=\bigcup_{t\in I}\Sigma_t''.
\]
Let $h_t=\ms O_{\PP^{n+1}}(1)|_{\Sigma_t}$ be the hyperplane
class. The relative Abel map gives the map 
\[
\alpha:\bm\Sigma_I\longrightarrow
\Pic^{2g}(\bm\Sigma/\DDD)|_I,\qquad
q\in\Sigma_t\longmapsto[h_t-q-\ol q].
\]
At either singular point $p=p_0,p_\infty$ of $\Sigma_0$, the
divisor $p+\ol p$ is interpreted as the Cartier divisor because
$p+\ol p=2p=\pi_0^*(\pi_0(p))$.

Let $\alpha_t:\Sigma_t\lras\Pic^{2g}(\Sigma_t)$ denote the
restriction of $\alpha$ to $\Sigma_t$, and define
\[
\mf V_t:=\alpha_t(\Sigma_t''),\qquad
\mf V:=\alpha(\bm\Sigma_I'')
      =\bigcup_{t\in I}\mf V_t.
\]
Suppose that $t\neq0$. If $q$ belongs to a pure
imaginary circle, then $\ol q$ is the image of $q$ under the
hyperelliptic involution. Hence
$q+\ol q\sim\pi_t^*\ms O_{\Lmd}(1)$, and therefore
$
\alpha_t(q)=[\pi_t^*\ms O_{\Lmd}(g)].
$
Thus $\alpha_t|_{\Sigma_t''}$ contracts all the pure imaginary
semicircles in the boundary of $\Sigma_t''$ to a single point.

Assume in addition that $g>1$, and suppose that
$q_1,q_2\in\Sigma_t''$ have the same image under $\alpha_t$.
Then
$q_1+\ol q_1\sim q_2+\ol q_2$. If these two effective divisors
are distinct, they belong to a positive-dimensional linear
system of degree two. By \cite[Proposition 2.1]{H25}, this
linear system is the hyperelliptic pencil. Hence both $q_1$
and $q_2$ belong to pure imaginary circles. It follows that
$\alpha_t|_{\Sigma_t''}$ is injective away from the pure
imaginary semicircles. Consequently, $\mf V_t$ is obtained
from $\Sigma_t''$ by contracting these semicircles to one
point.

When $g=1$, the restriction
$\alpha_t|_{\Sigma_t''}$ is not injective. We therefore
retain $\Sigma_t''$ as the parameter space and regard
$\mf V_t$ as the image of the parametrized map
$\alpha_t|_{\Sigma_t''}$. None of the arguments below
requires this map to be injective.

For $t\neq0$, after translating $\Pic^{2g}(\Sigma_t)$ to
$\Pic^0(\Sigma_t)$ by $x\mapsto[x-2gr_{1,t}]$ and identifying
$\Pic^0(\Sigma_t)$ with the Jacobian ${\rm J}(\Sigma_t)$, the restriction
$\alpha_t|_{\Sigma_t''}$ agrees with the map used in
\cite[Section 6.2]{H26}.

For each $t\in I\setminus\{0\}$, the boundary
construction in \cite[Section 6.2]{H26} determines a lift
$
\tilde\alpha_t
$
of $\alpha_t$; namely, a map $\tilde\alpha_t:\Sigma_t''\longrightarrow\Pic^g(\Sigma_t)$ that satisfies
$\mf t_t\circ\tilde\alpha_t
=
\alpha_t|_{\Sigma_t''}.$
We then put
$$\tilde{\mf V}_t:=\tilde\alpha_t(\Sigma_t'')$$
and call it a lift of $\mf V_t$.
At this stage, this is not defined for $t=0$.

\begin{proposition}\label{p:limit-lift}
The maps $\tilde\alpha_t$, $t\in I\setminus\{0\}$, extend
continuously across $t=0$ to a lift
$
\tilde\alpha_0:\Sigma_0''\longrightarrow\Pic^g(\Sigma_0)
$
of $\alpha_0|_{\Sigma_0''}$ under $\mf t_0$. Consequently,
if we define $\tilde{\mf V}_0:=\tilde\alpha_0(\Sigma_0'')$, then as $t\to 0$,
\[
\tilde{\mf V}_t\longrightarrow\tilde{\mf V}_0.
\]
When $n$ is odd, $\tilde\alpha_0$ is unique. When $n$ is
even, $\alpha_0|_{\Sigma_0''}$ has exactly two lifts under
$\mf t_0$, and $\tilde\alpha_0$ is one of them.
\end{proposition}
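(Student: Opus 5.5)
The strategy is to use that the relative squaring map $\mf T$ is a local biholomorphism (Proposition \ref{p:squaring}) and to lift the family $\alpha|_{\bm\Sigma_I''}$ through it as a continuous family, much like path lifting through a covering map. First, I check that $\alpha:\bm\Sigma_I\to\Pic^{2g}(\bm\Sigma/\DDD)|_I$ is continuous on $\bm\Sigma_I''$, including at the corners $p_0,p_\infty$ of $\Sigma_0''$. Away from the corners this is clear, since $h_t-q-\ol q$ is then a Cartier divisor varying continuously. At the corners, $q+\ol q$ degenerates to $\pi_0^*(\pi_0(p))$. Nearby this divisor equals $\pi_t^*$ of a point plus a small correction, so continuity again holds. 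The space $\bm\Sigma_I''$ is a family of closed half-disks, hence a contractible (simply connected, locally path connected) compact set. Its slice at $t=0$ is $\Sigma_0''$, and I parametrize all of it by $[-\epsilon,\epsilon]\times\Sigma_0''$ using the convergence of quarters.

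Second, I construct $\tilde\alpha_0$. When $n$ is odd, $\mf t_0$ is an isomorphism, so the unique lift is $\tilde\alpha_0:=\mf t_0^{-1}\circ\alpha_0|_{\Sigma_0''}$. When $n$ is even, the composition of the inverse of multiplication by $2$ on the vector part with the covering $z\mapsto z^2$ on the $\CC^*$ factor is a two-sheeted covering $\Pic^{\underline d}(\Sigma_0)\to\Pic^{2\underline d}(\Sigma_0)$. The image $\alpha_0(\Sigma_0'')$ lies in one multidegree component, since $\Sigma_0''$ is connected. Because $\Sigma_0''$ is simply connected, $\alpha_0|_{\Sigma_0''}$ has exactly two lifts, and they differ by the nontrivial deck transformation. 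This proves the uniqueness and the two-lift claims.

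Third, I show that the given lifts $\tilde\alpha_t$ converge to one of these lifts; this is the main obstacle. Since $\mf T$ is a local homeomorphism, its restriction over the compact image of $\alpha|_{\bm\Sigma_I''}$ behaves like a covering on a neighbourhood of the central fiber. To use this I need properness of $\mf T$ near the image, or at least that $\tilde\alpha_t(\Sigma_t'')$ stays in a compact region and does not escape to infinity. Over $t\neq0$ the number of square roots is $2^{2g}$, but only $1$ or $2$ survive at $t=0$. So the sheets of $\mf t_t$ coalesce in the limit, or many of them must run off to infinity in the non-compact Picard space of $\Sigma_0$. The boundary construction of \cite[Section 6.2]{H26} should pin $\tilde\alpha_t$ down at a corner, for instance at the point where the pure imaginary semicircles are contracted, to the root $[\pi_t^*\ms O_\Lmd(g')\otimes\ms O(\cdots r_{1,t})]$ built from ramification points. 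That root is expressed through the $r_{i,t}$ with $i\le n$, which converge to $r_{1,0}=p_0$, so it has a limit in $\Pic^g(\Sigma_0)$. Around this base point, local biholomorphicity of $\mf T$ gives a unique continuous local section through the limit point. Uniqueness of lifts of the contractible family $\bm\Sigma_I''$ (for $t$ near $0$) then identifies $\tilde\alpha_t$ with the restriction of this section, provided the lift can be continued over all of $\Sigma_0''$. To continue it I would cover $\alpha_0(\Sigma_0'')$ by finitely many evenly covered neighbourhoods and propagate the lift by a Lebesgue number argument on $[-\epsilon,\epsilon]\times\Sigma_0''$. For $t$ near $0$ this gives a lift coinciding with $\tilde\alpha_t$ by connectedness, and its value at $t=0$ is the desired $\tilde\alpha_0$. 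Convergence $\tilde{\mf V}_t\to\tilde{\mf V}_0$ then follows from uniform continuity of the lifted map on the compact parameter space.
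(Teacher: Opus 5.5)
Your overall architecture matches the paper's. You build $\tilde\alpha_0$ from $\mf t_0$, which is an isomorphism for $n$ odd and a two-sheeted covering for $n$ even. You continue it to small $t\neq0$ through local inverses of the local biholomorphism $\mf T$, using compactness. You then identify this continuation with $\tilde\alpha_t$ by matching at one reference point where the boundary condition of \cite{H26} fixes $\tilde\alpha_t$, and conclude by uniqueness of lifts on the connected $\Sigma_t''$.

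The gap is the reference point. Your inference ``the $r_{i,t}$ converge to $p_0$, so the root has a limit in $\Pic^g(\Sigma_0)$'' is not valid. The point $p_0$ is not a Cartier divisor on $\Sigma_0$, and classes of divisors whose support runs into a singular point need not converge in the relative Picard space. For $n$ odd a ramification-point anchor does work, but for the paper's reason. By \cite{H26}, $\tilde\alpha_t(r_{1,t})=[g r_{1,t}]$. Since $g$ is even, $g r_{1,t}=\frac g2\,\pi_t^*(a_1(t))$ is a continuous family of Cartier divisors, with Cartier limit $\frac g2\,\pi_0^*(0)=g p_0$, and $\mf t_0([g p_0])=\alpha_0(p_0)$.

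For $n$ even the corner anchor fails outright, and this is the missing idea. All pure imaginary semicircles and all ramification points run into $p_0$ or $p_\infty$. The value there is $[h_0-\pi_0^*(0)]=[\pi_0^*\ms O_{\Lmd}(g)]$ (similarly at $p_\infty$), of multidegree $(g,g)$ with $g$ odd. Squares have even multidegree, so this class is not in the image of $\mf t_0$ at all. Hence no root such as $[g r_{1,t}]=[\frac{g-1}2\,\pi_t^*(a_1(t))+r_{1,t}]$ can converge to a lift of it; the leftover single point $r_{1,t}\to p_0$ is precisely the non-Cartier obstruction.

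The same count contradicts your first step at the corners. A non-corner point $q\in\Sigma_0''$ lies, together with $\ol q$, on one real component of $\Sigma_0$. So $\alpha_0(q)$ lies in $\Pic^{(n-2,n)}(\Sigma_0)$ or $\Pic^{(n,n-2)}(\Sigma_0)$, an open and closed component different from the one containing the corner values. Hence $\alpha_0$ is not continuous at the corners, and they cannot serve as an anchor.

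The paper instead anchors at a point $q_t$ in the interior of the middle real semicircle, which survives and converges to a smooth point $q_0$ of $\Sigma_0$. There $\alpha_t(q_t)=[(n-2)q_t+nq_t']$, and the boundary condition of \cite{H26} gives $\tilde\alpha_t(q_t)=b_t=[\frac{n-2}2q_t+\frac n2q_t']$. Moreover $b_t\to b_0$ because $q_0$ and $q_0'$ are smooth points. One takes $\tilde\alpha_0$ to be the lift with $\tilde\alpha_0(q_0)=b_0$, which also decides which of the two lifts is the limit. The continuation-and-uniqueness step is then as in the paper and in your third step.
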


\proof
First, suppose that $n$ is odd. Since $r_{1,t}\to p_0$ as $t\to0$, the map $t\mapsto r_{1,t}$ defines a continuous section of $\bm\Sigma_I\to I$.
For each $t\in I$, put
$e_t:=[2gr_{1,t}]=\alpha_t(r_{1,t})\in\mf V_t$
and use this as a reference point. 
For $t\neq0$, the
boundary condition in \cite[(6.6)]{H26} gives
$
\tilde\alpha_t(r_{1,t})=[gr_{1,t}].
$

Since $g=n-1$ is even, we can write $gr_{1,t}=(g/2)(2r_{1,t})$. The divisors $2r_{1,t}$ form a continuous family of Cartier divisors and converge to the Cartier divisor $2p_0$ as $t\to0$. Consequently,
$$
[gr_{1,t}]\longrightarrow[gp_0],\qquad \mf t_0([gp_0])=[2gp_0]=e_0.
$$
As $n$ is odd, by Proposition \ref{p:squaring}, the squaring map $\mf t_0$ is an isomorphism. 
So we may define
\[
\tilde\alpha_0
:=
\mf t_0^{-1}\circ\alpha_0|_{\Sigma_0''},
\qquad
\tilde{\mf V}_0:=\tilde\alpha_0(\Sigma_0'').
\]
Since the relative squaring map $\mf T$ is a local
biholomorphism, for every $q_0\in\Sigma_0''$, its local
inverse at $\tilde\alpha_0(q_0)$ gives a unique local lift of
$\alpha$ extending $\tilde\alpha_0$ near $q_0$.
These local
continuations agree on overlaps by local uniqueness. Since
$\Sigma_0''$ is compact, they give, for all sufficiently
small $t$, a lift
\[
\widehat\alpha_t:\Sigma_t''\longrightarrow\Pic^g(\Sigma_t)
\]
of $\alpha_t|_{\Sigma_t''}$ extending $\tilde\alpha_0$.


To show $\tilde{\alpha}_t=\widehat{\alpha}_t$,
for all sufficiently small $t\neq0$, both arise as lifts of $\alpha_t:\Sigma_t''\to\mf V_t$ taking the reference point $r_{1,t}$ to $[gr_{1,t}]$.
The convergence $[gr_{1,t}]\to[gp_0]$ and the local
uniqueness of the inverse of $\mf T$ imply that
$
\widehat\alpha_t(r_{1,t})=[gr_{1,t}]
=\tilde\alpha_t(r_{1,t})
$
for all sufficiently small $t\neq0$. Thus
$\widehat\alpha_t$ and $\tilde\alpha_t$ are two lifts of
$\alpha_t|_{\Sigma_t''}$ agreeing at $r_{1,t}$. Since
$\Sigma_t''$ is connected, the uniqueness of lifting gives
$\widehat\alpha_t=\tilde\alpha_t$. This proves the assertion
when $n$ is odd.

Suppose next that $n$ is even. Since $g=n-1$ is odd, the
preceding argument based on
$gr_{1,t}=(g/2)(2r_{1,t})$ cannot be applied. We instead
choose a continuous family of reference points
$q_t\in\partial\Sigma_t''$, $t\in I$, such that each $q_t$
lies in the interior of the middle real semicircle. For
$t\neq0$, $q_t$ is therefore distinct from the endpoints
$r_{1,t}$ and $r_{2n,t}$ of this semicircle.
Let $q_t'$ denote the image of $q_t$ under the hyperelliptic
involution. Since the middle real semicircle survives in the
limit and $q_0$ lies in its interior, $q_0$ is a smooth point
of $\Sigma_0$. Moreover, $q_t'\to q_0'$, and $q_0'$ is also a smooth point
of $\Sigma_0$.

Since $q_t$ is real and $h_t\sim n(q_t+q_t')$, we have $\alpha_t(q_t)=[h_t-2q_t]=[(n-2)q_t+nq_t']$. We therefore put
$$
e_t:=[(n-2)q_t+nq_t'],\qquad b_t:=\left[\frac{n-2}{2}q_t+\frac n2q_t'\right],
$$
so that $\mf t_t(b_t)=e_t$. 
The point $q_t$ lies in the interior of the middle real
semicircle, and the boundary condition in \cite{H26} gives
$\tilde\alpha_t(q_t)=b_t$.

Because $q_0$ and $q_0'$ are smooth points of $\Sigma_0$, the divisors defining $b_t$ form a continuous family, and hence $b_t\to b_0:=\left[\frac{n-2}{2}q_0+\frac n2q_0'\right]$ as $t\to0$. 
Since $\Sigma_0''$ is simply connected and $\mf t_0$ is a
two-to-one covering by Proposition \ref{p:squaring}, the map
$\alpha_0|_{\Sigma_0''}$ has exactly two lifts. We denote by
$\tilde\alpha_0$ the lift satisfying
$\tilde\alpha_0(q_0)=b_0$, and put
$\tilde{\mf V}_0:=\tilde\alpha_0(\Sigma_0'')$.

As in the odd case, the local biholomorphism property of
$\mf T$ gives a continuation
$\widehat\alpha_t$ of $\tilde\alpha_0$. Since $b_t\to b_0$,
local uniqueness gives
$\widehat\alpha_t(q_t)=b_t=\tilde\alpha_t(q_t)$
for all sufficiently small $t\neq0$. Both maps are lifts of
$\alpha_t|_{\Sigma_t''}$, and hence the uniqueness of lifting
on the connected space $\Sigma_t''$ gives
$\widehat\alpha_t=\tilde\alpha_t$. Consequently,
$\tilde{\mf V}_t\to\tilde{\mf V}_0$, completing the proof.
\proofend

\medskip
When $n$ is even, Propositions \ref{p:Sigma0} and
\ref{p:br2} show that the tangency divisors of the ordinary
and twisted symmetric minitwistor families define the two
lifts of $\mf V_0$
under $\mf t_0$. Since $\mf t_0$ is a two-to-one covering by
Proposition \ref{p:squaring}, these exhaust all possible
lifts. Consequently, the limiting lift
$\widetilde{\mf V}_0$ obtained above
corresponds to either the ordinary or the twisted symmetric minitwistor family.
We do not need to determine which one.

\subsection{Proof of the convergence theorem}
Let $\ms T_t$, $t\in\DDD$, be the double covering of $C(\Lmd)$ with branch $\Sigma_t$.
The family $\{\ms T_t\set t\in I\}$, $I=\DDD\cap \RR$ is a one-parameter degeneration of minitwistor spaces, with $\ms T_0=Q/\ZZ_n$.
We want to prove that minitwistor lines in $\ms T_t$ converge to those in $Q/\ZZ_n$.
Let $\{\Sigma_t''\}_{t\in I}$ with $I=\DDD\cap\RR$ be the
corresponding family of quarters of the branch curves.
Take any continuous section $\{q_t\in\Sigma_t''\}_{t\in I}$ of $\bm\Sigma''_I
\lras I$
through a point $q_0\in\Sigma_0''$.
For each $t\in I$, let $C_t\subset\ms T_t$ be the member of
the natural closure of the symmetric family corresponding to
$q_t$ under the parametrization determined by the lift map
$\tilde\alpha_t$. We retain $q_t$ as part of the
parametrization, since a singular boundary point of
$\tilde{\mf V}_t$ need not determine a unique member.

\begin{proposition}\label{p:conv}
As $t\in\DDD\cap\RR$ approaches $0$, we have
$
C_t\longrightarrow C_0
$
as projective $1$-cycles in $\PP^{n+2}$.
\end{proposition}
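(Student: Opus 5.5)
Since $C_t$ is symmetric, it is determined by a real hyperplane: $C_t=\Pi_t^{-1}(h_t\cap C(\Lmd))$, where $h_t\subset\PP^{n+1}$ is the hyperplane with
\[
h_t|_{\Sigma_t}=q_t+\ol q_t+2D_t ,\qquad [D_t]=\tilde\alpha_t(q_t)\in\Pic^g(\Sigma_t).
\]
I would therefore reduce the convergence of the cycles $C_t$ to the convergence of the hyperplanes $h_t$ in the dual projective space $(\PP^{n+1})^\vee$. The projections $\Pi_t$ come from one fixed linear projection $\PP^{n+2}\dashrightarrow\PP^{n+1}$ with centre away from every $\ms T_t$ (rescaling $v$ as in Section~3). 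Also $\ms T_t$ varies flatly, being the double cover branched along the flat family $\bm\Sigma$. Hence $h\mapsto\Pi_t^{-1}(h\cap C(\Lmd))$ is continuous in $(t,h)$ as a map to the Chow variety of $1$-cycles of degree $2n$, and it suffices to show $h_t\to h_0$ with $C_0=\Pi_0^{-1}(h_0\cap C(\Lmd))$.

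\textbf{Step 1 (hyperplanes form a continuous family).} By Proposition~\ref{p:limit-lift}, $\tilde\alpha_t(q_t)\to\tilde\alpha_0(q_0)$ in the relative Picard space. I would take the line bundle $L_t:=h_t\otimes\ms O(-q_t-\ol q_t)$ of degree $2g$; it is $\mf T(\tilde\alpha_t(q_t))=\alpha(q_t)$ and so varies continuously. By construction $L_t\simeq\ms O(2D_t)$, and $2D_t$ is the unique divisor in $|L_t|$ of the form $2D$ cut by a hyperplane through $q_t,\ol q_t$. I would set this up as a closed incidence condition. In the space $\mathcal H$ of pairs (hyperplane $h$, $t$), the relative Abel--Jacobi description of $h|_{\Sigma_t}-q_t-\ol q_t$ gives a continuous map to the Hilbert scheme of degree-$2g$ subschemes. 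The subset
\[
\big\{(h,t)\;:\;h|_{\Sigma_t}-q_t-\ol q_t=2D\ \text{with}\ [D]=\tilde\alpha_t(q_t)\big\}
\]
is closed in the compact space $(\PP^{n+1})^\vee\times I$. Any limit point $h_*$ of $h_t$ therefore satisfies $h_*|_{\Sigma_0}=q_0+\ol q_0+2D_*$ with $[D_*]=\tilde\alpha_0(q_0)$. It remains to show that $h_*$ is uniquely determined by this condition; then $h_t\to h_*$.

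\textbf{Step 2 (identifying the limit).} For interior $q_0$, Proposition~\ref{p:Sigma0} gives a symmetric ordinary (or, for even $n$, twisted) line whose tangency divisor $D$ has class a square root of $\alpha_0(q_0)$. When $n$ is odd this square root is unique by Proposition~\ref{p:squaring}, so $[D]=\tilde\alpha_0(q_0)$. When $n$ is even, the remark after Proposition~\ref{p:limit-lift} shows that $\tilde\alpha_0$ is one of the two lifts coming from the ordinary and twisted families; in either case it is realized by an actual line $C_0$. For uniqueness of $h_*$, I would use that $h^0(\Sigma_0,\ms O(D))=1$, i.e. that the effective representative $D$ of its class is unique. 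This should follow because $D$ avoids the singular points (Proposition~\ref{p:br2}) and is supported where the normalization is rational with generalized Jacobian as computed. Once $D$ is fixed, the hyperplane through $q_0,\ol q_0$ and tangent along $D$ is unique, since a hyperplane is fixed by $2g+2=2n$ conditions on a degree-$2n$ curve, up to the one relation imposed by the linear equivalence. Proposition~\ref{p:br3} then confirms that $\Pi_0^{-1}(h_*\cap C(\Lmd))$ is the line $C_0$.

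\textbf{Step 3 (boundary points).} The main obstacle is $q_0$ on $\partial\Sigma_0''$: the multiple lines, the boundary lines, and the corners $p_0,p_\infty$. Here $h_0$ may pass through singular points of $\Sigma_0$ or through the vertex, and uniqueness in Step 2 may fail. My first approach is to avoid uniqueness and argue directly on the $Q/\ZZ_n$ side. For the natural closures, Proposition~\ref{p:Sigma0} gives explicit continuous formulas, $\xi\mapsto$ the lifted curve \eqref{ab3} or \eqref{ab4}, whose images are the multiple or boundary lines on the edges. Since the Chow variety is compact, any subsequential limit of $C_t$ is a degree-$2n$ real cycle $C_*$ which is $\Pi_0$-symmetric and contains the limits of $q_t,\ol q_t$. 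I would then approximate $q_0$ by interior points $q_0^{(k)}$ and use the interior case together with a diagonal argument; this needs local uniformity of the convergence in Step 1, which holds because $\mf T$ is a local biholomorphism near the compact set $\tilde{\mf V}_0$. That yields $C_*=\lim_k C_0^{(k)}=C_0$, where the last equality is the continuity of the explicit parametrization up to the boundary. Uniqueness of all subsequential limits then gives $C_t\to C_0$.
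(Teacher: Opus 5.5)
Your reduction is the same as the paper's: convergence of $C_t$ reduces to convergence of the hyperplanes, hence to $D_t\to D_0$, using $\tilde\alpha_t(q_t)\to\tilde\alpha_0(q_0)$ from Proposition~\ref{p:limit-lift}. But the step that carries the proof is not established. You only assert $h^0(\Sigma_0,\ms O(D_0))=1$. The reason you give, that $D_0$ avoids $p_0,p_\infty$ and the normalization is rational, does not imply it. A degree-$g$ divisor away from the singular points still moves as soon as it contains a pair $r+\iota r$ over one point of $\Lmd$. This is exactly what happens when the hyperplane passes through the vertex, for instance at the multiple lines on $\partial\Sigma_0''$.

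The missing idea is the paper's. Since $\Sigma_0$ is a quadric section of the cone, adjunction gives $\omega_{\Sigma_0}\simeq\pi_0^*\ms O_{\Lmd}(n-2)$, and all its sections are pulled back from $\Lmd$. Because $h_0$ misses the vertex, it meets each ruling once, so the $n-1$ points of $D_0$ have distinct images on $\Lmd$. Hence $H^0(\omega_{\Sigma_0}\otimes L_0^{-1})=0$, so $H^1(\Sigma_0,L_0)=0$ and $h^0(L_0)=1$.

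Your Step~1 also does not turn this into convergence. A subsequential limit $D_*$ of $D_t$ in the relative Hilbert scheme could a priori be supported at $p_0$ or $p_\infty$, where it need not be Cartier. Then neither $[D_*]$ nor ``$h_*|_{\Sigma_0}-q_0-\ol q_0=2D_*$'' makes sense, and nothing in your argument rules out a limit hyperplane $h_*$ through a singular point of $\Sigma_0$. So your incidence set is not evidently closed, and the comparison $[D_*]=\tilde\alpha_0(q_0)$ is unjustified.

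The paper avoids the compactness--uniqueness argument altogether. From $H^1(L_0)=0$, semicontinuity gives $h^0(L_t)=1$ for small $t$. Cohomology and base change then make the unique sections of $L_t$ a continuous family with nonzero limit, so $D_t\to D_0$ directly, and $D_0$ is Cartier and away from the singular points.

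In Step~3, the local biholomorphism of $\mf T$ gives uniform convergence of the lifts $\tilde\alpha_t$, but not of the curves. Recovering the divisor from its class degenerates at $\partial\Sigma_0''$ precisely because $h^0$ jumps there. So the uniformity you invoke needs a different justification; the paper itself only says ``taking limits from the interior'' at this point.
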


\proof
We first suppose that $q_0$ belongs to the interior of
$\Sigma_0''$.
Let
$\widetilde\alpha_t:\Sigma_t''\lra
\widetilde{\mf V}_t\subset\Pic^g(\Sigma_t)$
denote the lift map, and put
$\ell_t:=\widetilde\alpha_t(q_t)$.
Proposition \ref{p:limit-lift} and the convergence
$q_t\to q_0$ imply that
$
\ell_t\longrightarrow
\ell_0=\widetilde\alpha_0(q_0).
$
For $t\neq0$, let
$s_t\in H^0(\PP^{n+1},\ms O_{\PP^{n+1}}(1))$
be a nonzero linear form defining the hyperplane associated
with $C_t$, and let $D_t$ be its tangency divisor, with
multiplicities allowed. Since $C_t$ is the member selected by
the lift $\widetilde\alpha_t$ at $q_t$, we have
\[
\operatorname{div}(s_t|_{\Sigma_t})
=q_t+\ol q_t+2D_t,
\qquad
\ell_t=[D_t].
\]

By the discussion following Proposition
\ref{p:limit-lift}, the chosen lift
$\widetilde{\mf V}_0$ determines the relevant family described
in the previous section. Proposition \ref{p:Sigma0}
parameterizes its natural closure by $\Sigma_0''$.
Let $C_0\subset Q/\ZZ_n$ be the curve corresponding to
$q_0$ under this parametrization.
Let
$s_0\in H^0(\PP^{n+1},\ms O_{\PP^{n+1}}(1))$
be a nonzero linear form defining the corresponding
hyperplane, and let $D_0$ be its tangency divisor. Then
\[
\ell_0=[D_0],
\qquad
\operatorname{div}(s_0|_{\Sigma_0})
=q_0+\ol q_0+2D_0.
\]
Here we retain the parameter $q_0$, since the point
$\ell_0\in\widetilde{\mf V}_0$ alone need not determine
$C_0$ uniquely.

Locally near $\ell_0$ in the relative Picard space, choose a
family of line bundles representing its points, and let $L_t$
denote the line bundle corresponding to $\ell_t$. Since
$\bm\Sigma\lras\DDD$ is flat and its general fiber has genus
$g$, we have $p_a(\Sigma_t)=g$ for every $t$. Moreover,
$\deg L_t=g$, and hence Riemann--Roch gives
$\chi(\Sigma_t,L_t)=1$.
Since $\Sigma_0$ is a quadric section of $C(\Lmd)$,
adjunction gives
$\omega_{\Sigma_0}\simeq\pi_0^*\ms O_{\Lmd}(n-2)$.
Moreover, since $h_0$ does not pass through the vertex of the
cone, it cannot contain both points of any fiber of $\pi_0$;
hence the $g=n-1$ points of $D_0$ have distinct images on
$\Lmd$. A section of
$\omega_{\Sigma_0}\otimes L_0^{-1}$ would therefore give a
section of $\ms O_{\Lmd}(n-2)$ vanishing at $n-1$ distinct
points, and must be zero. Thus
\[
H^0(\Sigma_0,\omega_{\Sigma_0}\otimes L_0^{-1})=0.
\]
Therefore $H^1(\Sigma_0,L_0)=0$ by duality.
Upper semicontinuity then gives, after shrinking $\DDD$ if
necessary,
\[
H^1(\Sigma_t,L_t)=0,
\qquad
h^0(\Sigma_t,L_t)=1
\]
for every sufficiently small $t$. By cohomology and base change, the unique effective divisors
in the linear systems $|L_t|$ vary continuously.
Consequently, $D_t\to D_0$.

Together with $q_t\to q_0$ and the continuity of the real
structure, the relation above implies
\[
\operatorname{div}(s_t|_{\Sigma_t})
\longrightarrow
q_0+\ol q_0+2D_0
=
\operatorname{div}(s_0|_{\Sigma_0}).
\]
Since each $\Sigma_t$ is non-degenerate and a hyperplane in
$\PP^{n+1}$ is uniquely determined by its restriction to
$\Sigma_t$, it follows that
$
Z(s_t)\longrightarrow Z(s_0).
$
The double covers
$\Pi_t:\ms T_t\lra C(\Lmd)$ vary holomorphically with $t$.
Consequently,
\[
C_t
=
\Pi_t^{-1}\bigl(Z(s_t)\cap C(\Lmd)\bigr)
\longrightarrow
\Pi_0^{-1}\bigl(Z(s_0)\cap C(\Lmd)\bigr)
=
C_0
\]
as projective $1$-cycles.

Finally, suppose that $q_0$ belongs to the boundary of
$\Sigma_0''$. The parametrizations on the smooth fibers, as
well as the parametrization in Proposition \ref{p:Sigma0},
extend continuously to their natural closures, with multiple
and boundary members regarded as projective $1$-cycles.
The assertion therefore follows by taking limits from the
interior.
\proofend

\medskip
Combining Proposition \ref{p:conv} with the $S^1$-action, we obtain the convergence of the whole family of minitwistor lines.

\begin{theorem}\label{thm:conv}
Under the degeneration \eqref{lim1}, the family of minitwistor lines
on $\ms T_t$ converges to the family of minitwistor lines on
$Q/\ZZ_n$ corresponding to the hyperbolic orbifold $\BB/\ZZ_n$.
\end{theorem}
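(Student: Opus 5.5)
The plan is to reduce Theorem~\ref{thm:conv} to Proposition~\ref{p:conv} by means of the $S^1$-actions. These act on $\ms T_t$ and on $Q/\ZZ_n$ compatibly with the degeneration. The full three-dimensional family of minitwistor lines on each $\ms T_t$ is the $S^1$-orbit of the symmetric two-dimensional slice, parameterized by $\Sigma_t''$ as recalled from \cite{H26}. On $Q/\ZZ_n$ the analogous statement is Proposition~\ref{p:sym}: every ordinary (respectively twisted) line is an $S^1$-translate of a unique symmetric one of the form \eqref{ab3} (respectively \eqref{ab4}). The multiple and boundary lines are added as closure points.

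First I check that the $S^1$-action on $\ms T_t$, given by $(x,y,z)\mapsto(\tau x,\tau^{-1}y,z)$, extends to a holomorphic action on the total space of the family $\{\ms T_t\}$. In the coordinates $(v,w)=(x+y,x-y)$ it is a rotation in the $(x,y)$-plane. It commutes with the rescaling $v\mapsto v/\sqrt A$, since that rescaling acts linearly on $(x,y)$. Hence it is defined for every $t$ and acts linearly on $\PP^{n+2}$ through a family of projective transformations depending continuously on $t$. At $t=0$ I identify it with the action induced by $R_\theta$, namely $(\zeta,\eta)\mapsto(e^{i\theta}\zeta,e^{-i\theta}\eta)$, on $Q/\ZZ_n$. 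This identification is possible because the invariant monomials $\zeta^n$ and $\eta^n$ play the roles of $x$ and $y$, and $z=\zeta\eta$ is fixed. The effective circle is $S^1/\ZZ_n$, which matches $\tau=e^{in\theta}$.

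Given any convergent sequence of parameters, meaning a point $q_t\in\Sigma_t''$ with $q_t\to q_0$ and an angle $\tau_t\to\tau_0$, the corresponding line on $\ms T_t$ is $\tau_t\cdot C_t$. By Proposition~\ref{p:conv}, $C_t\to C_0$ as $1$-cycles. Since the group elements $\tau_t$ act on $\PP^{n+2}$ by projective linear maps converging to $\tau_0$, we obtain $\tau_t\cdot C_t\to\tau_0\cdot C_0$. This limit is a minitwistor line of $\BB/\ZZ_n$, by Propositions~\ref{p:sym} and~\ref{p:Sigma0} together with the boundary extension. Thus the parameter space $S^1\times\Sigma_t''$, taken modulo the obvious identifications, converges to the corresponding parameter space for $Q/\ZZ_n$, and the lines converge pointwise and uniformly on compacta.

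The main subtlety is which family the limit is. When $n$ is even, the limiting lift $\tilde{\mf V}_0$ selects either the ordinary or the twisted family, as noted after Proposition~\ref{p:limit-lift}. I will handle this as follows. The twisted family is the image of the ordinary one under the real automorphism $\Xi$ of Proposition~\ref{p:tw0}, and $\Xi$ commutes with the $S^1$-action. Therefore, whichever family arises, it determines an EW orbifold isomorphic to $\BB/\ZZ_n$. This gives the theorem as stated, namely convergence to a family of minitwistor lines corresponding to the hyperbolic orbifold.

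Two further checks remain at the level of closures. The first is the multiple lines, which form the fixed locus of the $S^1$-action in the parameter space. I will verify that the orbit collapses consistently there, so that their images as $1$-cycles are $S^1$-invariant. The second is the boundary lines, which are limits from the interior as in the final step of Proposition~\ref{p:conv}.
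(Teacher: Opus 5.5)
Your proposal is correct and follows essentially the same route as the paper, which likewise combines Proposition~\ref{p:conv} with the compatibility of the $S^1$-actions to get $\lambda_t\cdot C_t\to\lambda_0\cdot C_0$, and then uses that the symmetric families are slices for the $S^1$-action (by \cite{H26} for $t\neq0$, and by Propositions~\ref{p:sym} and~\ref{p:Sigma0} for $t=0$). Your two additions only make explicit what the paper leaves implicit: the check that the action extends over the family and is $\tau=e^{in\theta}$ at $t=0$, and the use of $\Xi$ to settle the ordinary-versus-twisted ambiguity for even $n$, which the paper handles in the remark after Proposition~\ref{p:limit-lift}.
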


\proof
Let $\{C_t\}_{t\in I}$ be any one-parameter family of
symmetric minitwistor lines considered in Proposition
\ref{p:conv}, and let $\lambda_t\in S^1$ vary continuously
with $t$. Proposition \ref{p:conv} and the compatibility of
the $S^1$-actions with the degeneration give
\[
\lambda_t\cdot C_t\longrightarrow\lambda_0\cdot C_0.
\]

The symmetric families form slices for the $S^1$-action.
For $t\neq0$, the symmetric family is constructed in
\cite[Section 6.2]{H26}, and its $S^1$-orbits are shown to
give the complete family in
\cite[Section 6.3, especially Theorem 6.10]{H26}.
For $t=0$, the corresponding statement follows from
Propositions \ref{p:sym} and \ref{p:Sigma0}.
Hence every one-parameter family of minitwistor lines is
locally obtained in this way. This proves the theorem.
\proofend

\medskip
\noindent\textbf{AI tool disclosure.}
The author used ChatGPT 5.6 Sol to assist with
checking calculations and mathematical arguments.


\begin{thebibliography}{99}

%







%
%
%



\bibitem{GH78}
G.\,W.\,Gibbons and S.\,W.\,Hawking,
{\em Gravitational multi-instantons},
Phys. Lett. B {\bf 78} (1978), no.\,4, 430--432.

\bibitem{Hartshorne}R. Hartshorne,
{\em Algebraic Geometry},
Graduate Texts in Mathematics, vol. 52,
Springer-Verlag, New York--Heidelberg, 1977.

\bibitem{Hi78}N. Hitchin, 
{\em Polygons and gravitons}, Math. Proc. Camb. Phil. Soc. 
{\bf 85} (1979), 465-476.

\bibitem{Hi82}N. Hitchin, 
 {\em  Complex manifolds and Einstein's equations},
Lecture Notes in Math.  {\bf 970} (1982)
73--99. 


\bibitem{Hi25}N. Hitchin, 
{\em ALE spaces and nodal curves},  
Quarterly J. Math. {\bf 76} (2025)  337--347.

\bibitem{H25}N. Honda,
{\em Hyperelliptic curves, minitwistors, and spacelike Zoll spaces},
to appear in Duke Math. J.;
arXiv:2502.11388.






%





\bibitem{HN11}N. Honda, F. Nakata,
{\em Minitwistor spaces, Severi varieties, and Einstein--Weyl structure},
Ann. Global Anal. Geom. {\bf 39} (2011) 
293--323. 

\bibitem{HN22}N. Honda, F. Nakata,
{\em The Einstein--Weyl spaces associated to Segre quartic surfaces},
to appear in Algebraic Geometry and Physics.
arXiv:2208.13567. 

\bibitem{H26}N. Honda,
{\em On the twistor spaces of ALE gravitational instantons
of type $A_{\rm odd}$}.
arXiv:2603.14720.


%





\bibitem{JT85}
P.E. Jones, K.P. Tod,
{\em Minitwistor spaces and Einstein--Weyl geometry},
Class. Quan. Grav. {\bf 2} (1985), 565--577.




\bibitem{Kro89}
P.\,B.\,Kronheimer,
{\em The construction of ALE spaces as hyper-K\"ahler quotients},
J. Differential Geom. {\bf 29} (1989), no.\,3, 665--683.








\bibitem{PT93}
H.\,Pedersen, K.\,P.\,Tod,
{\em Three Dimensional Einstein-Weyl Geometry},
Adv.\,Math {\bf 97} (1993) 74--109.



 

\bibitem{Ros54}
M.\,Rosenlicht,
{\em Generalized Jacobian varieties},
Ann. of Math. (2) {\bf 59} (1954), no.\,3, 505--530.
 
\bibitem{Stacks} The {Stacks Project Authors},
{\textit{Stacks Project}},
{{https://stacks.math.columbia.edu}} {2018},

 

 



 

\end{thebibliography}
\end{document}